\theoremstyle{plain}
\newtheorem{theorem}{Theorem}[section]
\newtheorem{lemma}[theorem]{Lemma}
\newtheorem{corollary}[theorem]{Corollary}
\newtheorem{proposition}[theorem]{Proposition}
\theoremstyle{definition}
\newtheorem{definition}[theorem]{Definition}
\theoremstyle{remark}
\newtheorem{remark}[theorem]{Remark}
\DeclareSymbolFont{AMSb}{U}{msb}{m}{n}
\DeclareMathSymbol{\N}{\mathalpha}{AMSb}{"4E}
\DeclareMathSymbol{\R}{\mathalpha}{AMSb}{"52}
\DeclareMathSymbol{\Z}{\mathalpha}{AMSb}{"5A}
\DeclareMathSymbol{\D}{\mathalpha}{AMSb}{"44}
\DeclareMathSymbol{\s}{\mathalpha}{AMSb}{"53}
\newcommand{\CAT}{CAT}
\newcommand{\eps}{\varepsilon}
\renewcommand{\phi}{\varphi}
\DeclareMathOperator{\Cut}{Cut}
\DeclareMathOperator{\supp}{supp}
\DeclareMathOperator{\Tr}{Tr}
\DeclareMathOperator{\tr}{tr}
\DeclareMathOperator{\Lip}{Lip}
\DeclareMathOperator{\Hess}{Hess}
\DeclareMathOperator{\vol}{vol}
\renewcommand{\tilde}{\widetilde}
\DeclareMathOperator{\diam}{diam}
\newcommand{\ke}{{K}}
\newcommand{\uk}{\kappa}
\DeclareMathOperator{\Ch}{Ch}
\newcommand{\m}{m}
\DeclareMathOperator{\ric}{ric}
\def\S2k{\mathbb{S}^2_\kappa}
\def\co{\colon\thinspace}
\def\SS{\mathbb S}
\def\const{\mathrm{const}}
\author{Vitali Kapovitch}\thanks{University of Toronto, vtk@math.utoronto.ca}
\author{Christian Ketterer}\thanks{University of Toronto, ckettere@math.toronto.edu}
\title{Weakly noncollapsed RCD spaces with upper curvature bounds}\thanks{\textit{2010 Mathematics Subject classification}. Primary 53C20, 53C21, Keywords: Riemannian curvature-dimension condition, upper curvature bound, Alexandrov space, optimal transport}
\begin{document}
\begin{abstract}
We show that if a  $CD(K,n)$ space $(X, d, f\mathcal H_n)$ with $n\ge 2$ has curvature bounded above by $\kappa$ in the sense of Alexandrov then  $f=const$.
\end{abstract}
\maketitle
\tableofcontents
\section{Introduction}
In ~\cite{GP-noncol} Gigli and  De Philippis introduced the following notion of a \emph{noncollapsed} $RCD(K,n)$ space. An $RCD(K,n)$ space $(X,d,m)$ is noncollapsed if $n$ is a natural number and $m=\mathcal H_n$.
A similar notion was considered by Kitabeppu in ~\cite{kbg}.

Noncollapsed $RCD(K,n)$ give a natural intrinsic generalization of  noncollapsing limits of manifolds with lower Ricci curvature bounds which are noncollapsed in the above sense by work of
Cheeger--Colding \cite{cheegercoldingI}. 

In ~\cite{GP-noncol} Gigli and  De Philippis also considered the following a-priori weaker notion.  An $RCD(K,n)$ space $(X,d,m)$ is \emph{weakly noncollapsed} if $n$ is a natural number and $m\ll \mathcal H_n$.
Gigli and  De Philippis gave several equivalent characterizations of weakly noncollapsed $RCD(K,n)$ spaces and studied their properties. By work of Gigli--Pasqualetto~\cite{gipa}, Mondino--Kell~\cite{kellmondino} and Bru\'e--Semola~\cite{brusem} it  follows that an $RCD(K,n)$ space is weakly noncollapsed iff $\mathcal R_n\ne\emptyset$ {where $\mathcal{R}_n$ is 
the rectifiable set of $n$-regular points in $X$.}

{
It is well-known that if $(X,d,m)=(M^n,g,e^{-f}d\vol_g)$ where $(M^n,g)$ is a smooth $n$-dimensional Riemannian manifold and $f$ is a smooth function on $M$ then  $(X,d,m)$ is  $RCD(K,n)$ iff $f=\const$. More precisely, 
the classical Bakry-Emery condition $BE(K,N)$, $K\in \mathbb{R}$ and $N\geq n$, for a (compact) smooth metric measure space $(M^n,g,e^{-f}d\vol_g)$, $f\in C^{\infty}(M)$, is
\begin{align*}
\frac{1}{2} L|\nabla u|_g^2\geq \langle \nabla Lu ,\nabla u \rangle_g + \frac{1}{N}(Lu)^2 + K|\nabla u|_g^2, \ \ \forall u\in C^{\infty}(M),
\end{align*}
where $L=\Delta - \nabla f$. In \cite[Proposition 6.2]{saintfleur} Bakry shows that $BE(K,N)$ holds if and only if 
\begin{align*}
\nabla f\otimes \nabla f \leq (N-n) \left(\ric_g + \nabla^2 f- K g\right).
\end{align*}
In particular, if $N=n$, then $f$ is locally constant.

On the other hand, in \cite{erbarkuwadasturm, agsbakryemery} it was proven that a metric measure space $(X,d,m)$ satisfies $RCD(K,N)$ if and only if 
the corresponding Cheeger energy satifies a weak version of $BE(K,N)$ that is equivalent to the classical version for $(M,g,e^{-f}\vol_g)$ from above.}

In ~\cite{GP-noncol} Gigli and  De Philippis conjectured that a weakly noncollapsed $RCD(K,n)$ space is already noncollapsed up to rescaling of the measure by a constant. Our main result is that
this conjecture holds if a weakly noncollapsed space has curvature bounded above in the sense of Alexandrov.

\begin{theorem}\label{main-thm}
Let $n\ge 2$  and let $(X,d,f \mathcal H_n)$ (where $f$ is $L^1_{loc}$ with respect to $\mathcal H_n$ {and $\supp (f\mathcal{H}^n) = X$})  be a complete metric measure space which is $CBA(\uk)$ (has curvature bounded above by $\uk$ in the sense of Alexandrov) and satisfies $CD(K,n)$.
Then $f=\const$ \footnote{{Here and in all applications by $f=\const$ we mean $f=\const$ a.e. with respect to $\mathcal H_n$.}} .
\end{theorem}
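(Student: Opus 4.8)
The plan is to reduce the statement to the classical computation of Bakry quoted above (\cite[Proposition 6.2]{saintfleur}), using the upper curvature bound to manufacture the Riemannian structure that a general $CD(K,n)$ space lacks; I would analyze $f$ first only on the regular set and globalize at the very end. Since $(X,d,f\mathcal{H}_n)$ is weakly noncollapsed $CD(K,n)$ with $\supp(f\mathcal{H}_n)=X$, the structure theory recalled in the introduction gives that the essential dimension is $n$, that the set $\mathcal{R}_n$ of points with Euclidean tangent cone $\mathbb{R}^n$ has full $\mathcal{H}_n$-measure, and that $\mathcal{H}_n$ restricted to $\mathcal{R}_n$ agrees a.e. with the $n$-rectifiable volume. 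The first role of $CBA(\uk)$ is to upgrade this: a $CAT(\uk)$ space is non-branching and its tangent cones are Euclidean cones over $CAT(1)$ spaces of directions, so at a point with tangent $\mathbb{R}^n$ the space of directions is the round $\mathbb{S}^{n-1}$ (here $n\ge 2$ keeps it connected), and the set of such manifold points is open and dense by the regularity theory of spaces with curvature bounded above. The singular set $S=X\setminus\mathcal{R}_n$ then has codimension at least one; non-branching excludes interior codimension-one singularities, so $S$ is either of codimension $\ge 2$ or a genuine boundary, and in both cases $\mathcal{R}_n$ is connected because $X$ is a geodesic, hence connected, space.

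Next I would extract a Riemannian structure on $\mathcal{R}_n$. On the manifold part the distance functions are semiconcave, so $\mathcal{R}_n$ carries a canonical difference-of-convex Riemannian metric $g$ whose coefficients are twice differentiable $\mathcal{H}_n$-a.e.\ (Alexandrov's theorem), with $\mathcal{H}_n|_{\mathcal{R}_n}=\vol_g$. Thus $(\mathcal{R}_n,g,f\vol_g)$ is an (a.e.) smooth weighted Riemannian manifold, and it is infinitesimally Hilbertian. Moreover the $CD(K,n)$ condition provides second-order control of the density, namely semiconvexity of $V=-\log f$ along geodesics, so that $V$ (hence $f$) is itself twice differentiable $\mathcal{H}_n$-a.e.; this supplies exactly the regularity needed to run a pointwise argument.

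On the weighted manifold $(\mathcal{R}_n,g,e^{-V}\vol_g)$ the synthetic $CD(K,n)$ condition is equivalent, by \cite{erbarkuwadasturm, agsbakryemery}, to the weak Bakry--\'Emery condition $BE(K,n)$, which on an (a.e.) smooth manifold agrees pointwise a.e.\ with the classical one. Applying Bakry's inequality with $N=n$ yields $\nabla V\otimes\nabla V\le (n-n)(\ric_g+\nabla^2 V-Kg)=0$ a.e.; since $\nabla V\otimes\nabla V\ge 0$ always, this forces $\nabla V=0$, so $V$, and hence $f$, is locally constant on $\mathcal{R}_n$. As $\mathcal{R}_n$ is connected, $f$ is constant there, and since $\mathcal{H}_n(X\setminus\mathcal{R}_n)=0$ we conclude $f=\const$ a.e.\ on $X$.

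The main obstacle is the second step: extracting a genuinely (difference-of-convex) Riemannian structure on $\mathcal{R}_n$ from the one-sided sectional bound $CBA(\uk)$ together with the one-sided Ricci bound $CD(K,n)$. Classical rigidity results produce a $C^{1,\alpha}$ metric only under two-sided sectional bounds, which we do not have; the content here is that the upper sectional bound alone, via semiconcavity of distance functions and the Euclidean tangent cones guaranteed by weak noncollapsedness, already yields a second-order structure good enough to identify the synthetic and classical $BE(K,n)$ conditions and to apply Bakry's pointwise inequality. Verifying that this low regularity suffices for the identification, and that the measure-zero singular set does not obstruct the passage from ``locally constant on $\mathcal{R}_n$'' to ``constant on $X$'', are the technical cruxes.
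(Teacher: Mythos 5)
Your proposal founders on the step you yourself flag as the ``technical crux,'' and the difficulty there is not merely technical---it is the precise reason the paper does \emph{not} argue via Bakry's inequality. Your plan requires making pointwise a.e.\ sense of $\nabla V\otimes\nabla V\le (N-n)\left(\ric_g+\nabla^2V-Kg\right)$ on the regular set, i.e.\ it requires a Ricci tensor. But the Riemannian metric that the $CAT(\uk)$ structure provides (via Lytchak--Nagano/Perelman $DC$ coordinates) has coefficients $g^{ij}$ that are only $BV$ (they are first derivatives of $DC$ distance functions): their distributional derivatives are measures, so the Christoffel symbols are measures, and the curvature tensor would involve both derivatives of measures and \emph{products} of measures, neither of which is defined. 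Your appeal to Alexandrov's theorem does not rescue this: Alexandrov gives a.e.\ second differentiability of the $DC$ coordinate functions, hence a.e.\ \emph{first} (approximate) differentiability of $g^{ij}$, which is one order short of what $\ric_g$ needs. Likewise, the equivalence of the synthetic $CD/BE(K,n)$ condition with the classical Bakry--\'Emery inequality is proven for smooth weighted manifolds; there is no result identifying the weak $BE(K,n)$ condition with a pointwise tensor inequality for a $BV$ metric, and producing one is essentially as hard as the theorem itself. This is exactly why the paper's argument is designed to ``not involve the Ricci curvature tensor'': it instead proves, in $DC$ coordinates, that the Laplacian $\Delta_0$ of $(U,d,\mathcal H_n)$ equals the trace of Gigli's Hessian (a purely second-order identity that $BV$ regularity of $g$ can support), compares this with Han's theorem asserting $\Delta u=\tr\Hess u$ for the Laplacian of $(X,d,f\mathcal H_n)$, and extracts $\langle\nabla u,\nabla\log f\rangle=0$ from the integration-by-parts relation $\Delta u=\Delta_0u+\langle\nabla u,\nabla\log f\rangle$.

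A second, smaller gap is your globalization step. You assert that the singular set has codimension $\ge 2$ (or is boundary-like) and hence that the regular set $\mathcal R_n$ is connected; no such structure result is available at this point, and the paper explicitly notes that connectedness of the regular set is \emph{not} known. The paper instead globalizes by a measure-theoretic argument: by the Cavalletti--Mondino disintegration along geodesics from a fixed regular point $p$, for a.e.\ $q$ the geodesic $[pq]$ meets the (open) regular set in a set of full $\mathcal L^1$-measure; since the density $\theta=f$ is semiconcave on $X$ and locally constant on the regular set, it is Lipschitz on $[pq]$ and therefore constant along $[pq]$, which yields $f=\const$ a.e.\ globally. You would need to replace your codimension claim with an argument of this kind (or an actual proof of connectedness), since ``locally constant on a full-measure open set'' does not by itself imply ``constant.''
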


Since smooth Riemannian manifolds locally have curvature bounded above this immediately implies
\begin{corollary}
Let  $(M^n,g)$ be a smooth Riemannian manifold and suppose $(M^n,g,f\mathcal H_n)$ is $CD(K,n)$ where $K$ is finite and $f\ge 0$  is $L^1_{loc}$ with respect to $\mathcal H_n$ {and $\supp (f\mathcal{H}^n)=M$}.
Then  $f=\const$.

\end{corollary}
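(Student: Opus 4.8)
The plan is to reduce Theorem~\ref{main-thm} to the smooth weighted statement recalled in the introduction (Bakry's $BE(K,N)$ computation with $N=n$), by showing that the two-sided curvature control forces $X$ to be, up to a negligible set, a Riemannian manifold on which $\mathcal H_n$ is the Riemannian volume. The guiding heuristic is the pointwise linear--algebra fact that on a smooth $n$-manifold an upper sectional bound $\mathrm{sec}\le\kappa$ together with a lower Ricci bound $\Ric\ge K$ yields a lower sectional bound $\mathrm{sec}\ge K-(n-2)\kappa$: fixing a plane and writing $\Ric(v,v)$ as a sum of $n-1$ sectional curvatures, all but one of which are $\le\kappa$, forces the remaining one to be $\ge K-(n-2)\kappa$. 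Thus I expect $X$ to carry \emph{two-sided} synthetic curvature bounds, and the whole difficulty is to make this synthetic and to extract genuine manifold regularity from it.

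First I would exploit the upper bound. Since $CBA(\kappa)$ means $X$ is locally $CAT(\kappa)$, geodesics are locally unique and non-branching, and every tangent cone $T_xX=C(\Sigma_x)$ is a $CAT(0)$ cone over the $CAT(1)$ space of directions $\Sigma_x$. Because a normed space is $CAT(0)$ only if it is Euclidean (the $CN$/parallelogram identity), any blow-up tangent that carries a compatible vector-space structure is forced to be isometric to some $\mathbb R^k$; this is the mechanism by which $CBA(\kappa)$ produces Hilbertian tangents and hence infinitesimal Hilbertianity. Combined with non-branching this upgrades $CD(K,n)$ to $RCD(K,n)$. Since $m=f\mathcal H_n\ll\mathcal H_n$, the space is weakly noncollapsed, so by the cited results of Gigli--Pasqualetto, Mondino--Kell and Bru\'e--Semola the regular set $\mathcal R_n$ has full measure; in particular $X$ is $n$-rectifiable with tangent $(\mathbb R^n,|\cdot|)$ at $\mathcal H_n$-a.e.\ point, and the $\mathcal H_n$-density of $X$ equals $1$ a.e.

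The crux is the reverse (lower) bound. Here I would analyze the space of directions: from the tangent $C(\Sigma_x)$ being $CD(0,n)$ one gets, by the cone characterization, that $\Sigma_x$ is $CD(n-2,n-1)$ with diameter $\le\pi$, while $CBA(\kappa)$ forces $\Sigma_x$ to be $CAT(1)$. An induction on $n$ (the base case being a one-dimensional length space, a circle or interval carrying two-sided bounds) then pins $\Sigma_x$ down to the round sphere $\mathbb S^{n-1}$ at regular points, realizing the heuristic bound $\mathrm{sec}\ge K-(n-2)\kappa$ and making $X$ an Alexandrov space with curvature bounded below as well. Two-sided curvature bounds in the Alexandrov/$CAT$ sense are exactly the hypotheses of the Berestovskii--Nikolaev regularity theory, under which $X$ is a $C^{1,\alpha}$ Riemannian manifold $(M^n,g)$ with $\mathcal H_n=\vol_g$. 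I expect this passage from synthetic two-sided bounds to manifold regularity, together with the uniformity needed to run it at a.e.\ point, to be the main obstacle.

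Once $X=(M^n,g)$ is a manifold with $m=f\,\vol_g$, the $RCD(K,n)$ condition is precisely the weak Bakry--\'Emery inequality $BE(K,n)$ recalled in the introduction, now with the sharp dimension $N=n$. By Bakry's identity quoted there, $BE(K,n)$ for $(M,g,f\,\vol_g)$ is equivalent to $\nabla f\otimes\nabla f\le (N-n)(\Ric_g+\nabla^2 f-Kg)$, whose right-hand side vanishes for $N=n$; hence $\nabla f\otimes\nabla f\le 0$, so $\nabla f\equiv 0$ and $f=\const$ on the connected components of $M$, the hypothesis $\supp(f\mathcal H_n)=X$ removing the remaining ambiguity. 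Finally, the Corollary is immediate: a small geodesic ball in a smooth manifold has sectional curvature $\le\kappa$ for some $\kappa$ and is $CAT(\kappa)$, hence $CBA(\kappa)$ and (by locality of $CD$) $CD(K,n)$, so Theorem~\ref{main-thm} makes $f$ locally constant, hence constant.
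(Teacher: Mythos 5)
Your closing paragraph --- restrict to a small closed geodesically convex ball, observe that a smooth manifold is $CAT(\kappa)$ there, note that $CD(K,n)$ passes to geodesically convex subsets, and invoke Theorem~\ref{main-thm} to conclude $f$ is locally constant, hence constant --- is precisely the paper's proof of this corollary: the paper states it as an immediate consequence of Theorem~\ref{main-thm}, since smooth manifolds locally have curvature bounded above. Had you stopped at citing Theorem~\ref{main-thm}, the proof would be correct and identical to the paper's.

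The problem is the rest of the proposal, which is an attempted re-proof of Theorem~\ref{main-thm}, and it has a genuine gap exactly at the step you yourself call the crux. You want to pass from $CAT(\kappa)$ plus $CD(K,n)$ with the \emph{unknown} density $f$ to a lower Alexandrov curvature bound, via identifying the spaces of directions $\Sigma_x$ with round spheres. Two objections. First, knowing $\Sigma_x\cong\SS^{n-1}$ at a.e.\ point is automatic once $\mathcal{R}_n$ has full measure, but it is an infinitesimal, almost-everywhere statement; the Alexandrov condition $\curv\ge K-(n-2)\kappa$ is a triangle-comparison inequality at positive scale and at \emph{all} points, and it simply does not follow from pointwise Euclidean tangents. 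Second, and more seriously, the implication ``$CD$ + $CAT$ $\Rightarrow$ lower Alexandrov bound'' proved in \cite{Kap-Ket-18} requires $\m=\mathcal H_n$, i.e.\ it requires the very conclusion $f=\const$ you are trying to establish; in the paper the lower Alexandrov bound (Corollary~\ref{main-cor}) is a \emph{consequence} of Theorem~\ref{main-thm}, not an ingredient of its proof. Your route is therefore circular: the weighted condition $CD(K,n)$ with a nonconstant density cannot be converted into sectional-type comparison before the density has been killed. Finally, even granting two-sided bounds, Berestovskii--Nikolaev regularity yields only a $C^{1,\alpha}$ atlas with metric of Sobolev regularity, while $f$ is a priori only semiconcave; Bakry's identity $\nabla f\otimes\nabla f\le (N-n)\left(\ric_g+\nabla^2 f-Kg\right)$ needs a classical Ricci tensor and Hessian, so it does not apply off the shelf --- this is exactly the obstruction the paper points out (``Bakry's proof does not easily generalize to a non-smooth context'') and circumvents by an entirely different mechanism: the $DC$-calculus identity $\Delta u=\tr\Hess u$ on the regular set together with integration by parts against $f$, an argument that never invokes curvature tensors or manifold smoothness.
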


As was mentioned above, this corollary was well-known in case of smooth $f$  but was not known in case of general locally integrable $f$.

In~\cite{Kap-Ket-18} it was shown that if a $(X,d,m)$ is $CD(K,n)$ and has curvature bounded above then $X$ is $RCD(K,n)$ and if in addition $m=\mathcal H_n$ then $X$ is Alexandrov with two sided curvature bounds.
Combined with Theorem~\ref{main-thm} this implies that the same remains true if the assumption on the measure is weakened to $m\ll \mathcal H_n$.

\begin{corollary}\label{main-cor}
Let $n\ge 2$  and let $(X,d,f \mathcal H_n)$ where $f$  is $L^1_{loc}$ with respect to $\mathcal H_n$ {and $\supp (f\mathcal{H}^n)=X$} be a complete metric measure space which is $CBA(\uk)$ (has curvature bounded above by $\uk$ in the sense of Alexandrov) and satisfies $CD(K,n)$.
Then $X$ is $RCD(K,n)$, $f=\const$, $\uk (n-1)\geq \ke$,  and $(X,d)$ is an Alexandrov space of curvature bounded below by $\ke-\uk (n-2)$.
\end{corollary}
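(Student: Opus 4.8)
The plan is to deduce Corollary~\ref{main-cor} by combining Theorem~\ref{main-thm} with the results of \cite{Kap-Ket-18}. First I would apply Theorem~\ref{main-thm} to conclude that $f=\const$. Since $f$ is nonnegative, $L^1_{loc}$, and $\supp(f\mathcal H_n)=X$, this constant is strictly positive, so $f\mathcal H_n=c\,\mathcal H_n$ for some $c>0$. As the $CD(K,n)$ condition is unchanged when the reference measure is multiplied by a positive constant, after this rescaling we may regard $(X,d,\mathcal H_n)$ as a $CD(K,n)$ space with curvature bounded above by $\uk$ and reference measure exactly $\mathcal H_n$.

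Next I would invoke the main result of \cite{Kap-Ket-18}: a $CD(K,n)$ space with curvature bounded above is $RCD(K,n)$, and if moreover the reference measure equals $\mathcal H_n$ then $(X,d)$ is an $n$-dimensional Alexandrov space with two-sided curvature bounds, in particular with curvature bounded above by $\uk$ in the synthetic sense. This already establishes the first two assertions (that $X$ is $RCD(K,n)$ and that $f=\const$) and provides an honest Alexandrov space with an upper curvature bound on which to compute.

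It then remains to identify the explicit constants. The relevant feature is that a noncollapsed $RCD(K,n)$ space carrying an upper curvature bound is regular enough---on a set of full $\mathcal H_n$-measure it has a $C^{1,\alpha}$ Riemannian structure with Euclidean tangent cones---that the classical pointwise comparison applies. At such a regular point, for a unit tangent vector $v$ the Ricci curvature is the sum of the $n-1$ sectional curvatures $\sec(v,e_i)$ over an orthonormal frame $\{e_i\}$ of $v^{\perp}$; each summand is at most $\uk$, while $RCD(K,n)$ forces $\Ric(v,v)\ge \ke$. Summing yields $\ke\le(n-1)\uk$, i.e.\ $\uk(n-1)\ge\ke$. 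For a lower bound on an individual sectional curvature $\sec(v,w)$ I would isolate that term via $\sec(v,w)=\Ric(v,v)-\sum_{e_i\ne w}\sec(v,e_i)$ and bound the remaining $n-2$ terms above by $\uk$, obtaining $\sec(v,w)\ge \ke-(n-2)\uk=\ke-\uk(n-2)$. A density and semicontinuity argument then propagates this pointwise bound, valid on a full-measure set, to a global synthetic Alexandrov lower curvature bound, showing $(X,d)$ has curvature bounded below by $\ke-\uk(n-2)$.

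The main obstacle will be justifying the pointwise comparison rigorously in the metric setting: one has to verify that the synthetic upper bound $\uk$ and the Ricci lower bound encoded by $RCD(K,n)$ both localize to the same genuine curvatures on the regular part, and that the resulting pointwise inequalities upgrade to global Alexandrov bounds. This rests on the regularity theory for spaces with two-sided curvature bounds furnished by \cite{Kap-Ket-18}, which supplies both the smoothness needed for the frame computation and the density needed to globalize the estimates.
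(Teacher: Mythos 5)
Your first two steps coincide exactly with the paper's proof: apply Theorem~\ref{main-thm} to get $f=\const$, observe that the constant is strictly positive and that $CD(K,n)$ is unaffected by multiplying the reference measure by a positive constant, and then feed $(X,d,\mathcal H_n)$ into the theorem of \cite{Kap-Ket-18}. What you miss is that the result of \cite{Kap-Ket-18} is already quantitative: for a $CD(K,n)$ space which is $CAT(\kappa)$ with reference measure $\mathcal H_n$, its conclusion contains precisely the statements $\kappa(n-1)\ge K$ and that $(X,d)$ is an Alexandrov space of curvature bounded below by $K-\kappa(n-2)$. So the corollary follows from the citation alone, and the paper's argument stops there; there are no constants left to identify.

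Your attempted re-derivation of the constants is therefore redundant, and as written it would not stand on its own. A $C^{1,\alpha}$ Riemannian structure does not carry a pointwise (or even a.e.) defined curvature tensor, so the frame computation $\Ric(v,v)=\sum_i \sec(v,e_i)$ has no direct meaning at that regularity; to make sense of curvatures almost everywhere one needs the $W^{2,p}$-regularity of the metric in harmonic coordinates coming from Berestovskii--Nikolaev theory for spaces with \emph{two-sided} Alexandrov bounds, and that theory presupposes a synthetic lower bound --- the very thing whose constant you are trying to produce. This makes the argument circular unless you already accept the quantitative form of \cite{Kap-Ket-18}, in which case it is unnecessary. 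Moreover, upgrading an a.e. pointwise inequality $\sec\ge K-(n-2)\kappa$ to a global synthetic Alexandrov bound is not a routine ``density and semicontinuity argument''; for non-smooth metrics this globalization is a substantial theorem and is in fact part of the hard content of \cite{Kap-Ket-18} itself. Your computation is the correct heuristic for why these particular constants appear, but as a proof it delegates exactly the difficult steps to hand-waving; the correct fix is simply to quote \cite{Kap-Ket-18} in its full quantitative form.
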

\begin{remark}
Note that since a space $(X,d,f \mathcal H_n)$ satisfying the assumptions of Theorem~\ref{main-thm} is automatically  $RCD(K,n)$, as was remarked in ~\cite{GP-noncol} it follows  from the results of ~\cite{kellmondino} that $n$ must be an integer.
\end{remark}{
Bakry's proof for smooth manifolds does not easily generalize to a non-smooth context. 
But let us describe a strategy that does generalize to $RCD+CAT$ spaces. 

Assume that $(X,d)$ is induced by a smooth manifold $(M^n,g)$ and the density function $f$ is smooth
and positive such that $(X,d,f\m)$ satisfies $RCD(K,n)$. 
Then, by integration by parts on $(M,g)$ the induced Laplace operator $L$ is given by
\begin{align}\label{lll}
L u= \Delta u - \langle \nabla \log f, \nabla u\rangle, \ \ u\in C^{\infty}(M),
\end{align}
where $\Delta u$ is the classical Laplace-Beltrami operator of $(M,g)$ for smooth functions. 
By a recent result of Han one has for any $RCD(K,n)$ space that the operator $L$ is equal to the trace of Gigli's Hessian \cite{giglinonsmooth} on the set of $n$-regular points 
$\mathcal{R}_n$. Hence, after one identifies the trace of Gigli's Hessian with the Laplace-Beltrami operator $\Delta$ of $M$ (what is true on $(M^n,g)$), one obtains immediately that $\nabla \log f=0$. If $M$ is connected, this yields the claim.

The advantage of this approach is that it does not involve the Ricci curvature tensor and in non-smooth context one might follow the same strategy. However, we have to overcome several difficulties that arise from the non-smoothness of the density function $f$ and of the space $(X,d,\m)$.

In particular, we apply the recently developed $DC$-calculus by Lytchak-Nagano for spaces with upper curvature bounds to show that on the regular part of $X$ the Laplace operator with respect to $\mathcal H_n$ is equal to the trace of the Hessian. 
We also show that the combination of CD and CAT condition implies that $f$ is locally semiconcave ~\cite{Kap-Ket-18} and hence locally Lipschitz on the regular part of $X$. This allows us to generalize the above argument for smooth Riemannian manifolds to the general case.

In section 2 we provide necessary preliminaries. We present the setting of $RCD$ spaces and the calculus for them. We state important results by Mondino-Cavalletti
(Theorem \ref{th:laplacecomparison}), Han (Theorem \ref{th:tracelaplace}) and Gigli (Theorem \ref{th:hesscont}, Proposition \ref{prop:Hess}). We also give a brief introduction to the calculus of $BV$ and $DC$ function for spaces with upper curvature bounds. 

In section 3 we develop a structure theory for general $RCD+CAT$ spaces where we adapt the $DC$-calculus of Lytchak-Nagano \cite{Lytchak-Nagano18}. This might be of independent interest. 

Finally, in section 4 we prove our main theorem following the above idea.}
\subsection{Acknowledgements}
The first author is funded by a Discovery grant from NSERC.  
The second author is funded by the Deutsche Forschungsgemeinschaft (DFG, German Research Foundation) -- Projektnummer 396662902.
We are grateful to Alexander Lytchak for a number of  helpful conversations.
\section{Preliminaries}
\subsection{Curvature-dimension condition}
A \textit{metric measure space} is a triple $(X,d,\m)$ where $(X,d)$ is a complete and separable metric space and $\m$ is a locally finite measure.

$\mathcal{P}^2(X)$ denotes the set of Borel probability measures $\mu$ on $(X,d)$ such that $\int_Xd(x_0,x)^2d\mu(x)<\infty$ for some $x_0\in X$ equipped with $L^2$-Wasserstein distance $W_2$. The sub-space of $\m$-absolutely continuous probability measures in $\mathcal{P}^2(X)$ is denoted by $\mathcal{P}^2(X,\m)$.

The \textit{$N$-Renyi entropy} is
\begin{align*}
S_N(\cdot|\m):\mathcal{P}^2_b(X)\rightarrow (-\infty,0],\ \ S_N(\mu|\m)=-\int \rho^{1-\frac{1}{N}}d\m\ \mbox{ if $\mu=\rho\m$, and }0\mbox{ otherwise}.
\end{align*}
$S_N$ is lower semi-continuous, and $S_N(\mu)\geq - \m(\supp\mu)^{\frac{1}{N}}$ by Jensen's inequality.

For $\kappa\in \mathbb{R}$ we define 
\begin{align*}
\cos_{\kappa}(x)=\begin{cases}
 \cosh (\sqrt{|\kappa|}x) & \mbox{if } \kappa<0\\
1& \mbox{if } \kappa=0\\
\cos (\sqrt{\kappa}x) & \mbox{if } \kappa>0
                \end{cases}
                \quad
   \sin_{\kappa}(x)=\begin{cases}
\frac{ \sinh (\sqrt{|\kappa|}x)}{\sqrt{|\kappa|}} & \mbox{if } \kappa<0\\
x& \mbox{if } \kappa=0\\
\frac{\sin (\sqrt{\kappa}x)}{\sqrt \kappa} & \mbox{if } \kappa>0
                \end{cases}                 
                \end{align*}
Let $\pi_\kappa$ be the diameter of a simply connected space form $\S2k$ of constant curvature $\kappa$, i.e.
\[
\pi_\kappa= \begin{cases}
 \infty \ &\textrm{ if } \kappa\le 0\\
\frac{\pi}{\sqrt \kappa}\ &  \textrm{ if } \kappa> 0

\end{cases}
\]
For $K\in \mathbb{R}$, $N\in (0,\infty)$ and $\theta\geq 0$ we define the \textit{distortion coefficient} as
\begin{align*}
t\in [0,1]\mapsto \sigma_{K,N}^{(t)}(\theta)=\begin{cases}
                                             \frac{\sin_{K/N}(t\theta)}{\sin_{K/N}(\theta)}\ &\mbox{ if } \theta\in [0,\pi_{K/N}),\\
                                             \infty\ & \ \mbox{otherwise}.
                                             \end{cases}
\end{align*}
Note that $\sigma_{K,N}^{(t)}(0)=t$.
For $K\in \mathbb{R}$, $N\in [1,\infty)$ and $\theta\geq 0$ the \textit{modified distortion coefficient} is
\begin{align*}
t\in [0,1]\mapsto \tau_{K,N}^{(t)}(\theta)=\begin{cases}
                                            \theta\cdot\infty \ & \mbox{ if }K>0\mbox{ and }N=1,\\
                                            t^{\frac{1}{N}}\left[\sigma_{K,N-1}^{(t)}(\theta)\right]^{1-\frac{1}{N}}\ & \mbox{ otherwise}.
                                           \end{cases}\end{align*}
\begin{definition}[\cite{stugeo2,lottvillani,bast}]
We say $(X,d,\m)$ satisfies the \textit{curvature-dimension condition} $CD(\ke,N)$ for $\ke\in \mathbb{R}$ and $N\in [1,\infty)$ if for every $\mu_0,\mu_1\in \mathcal{P}_b^2(X,\m)$ 
there exists an $L^2$-Wasserstein geodesic $(\mu_t)_{t\in [0,1]}$ and an optimal coupling $\pi$ between $\mu_0$ and $\mu_1$ such that 
$$
S_N(\mu_t|\m)\leq -\int \left[\tau_{K,N}^{(1-t)}(d(x,y))\rho_0(x)^{-\frac{1}{N}}+\tau_{K,N}^{(t)}(d(x,y))\rho_1(y)^{-\frac{1}{N}}\right]d\pi(x,y)
$$
where $\mu_i=\rho_id\m$, $i=0,1$.

\begin{remark}
If $(X,d,\m)$ is complete and satisfies the condition $CD(\ke,N)$ for $N<\infty$, then $(\supp \m, d)$ is a geodesic space and $(\supp\m,  d,\m)$ is 
$CD(\ke,N)$.

In the following we always assume that $\supp\m=X$.
\end{remark}
\begin{remark} 
For the variants $CD^*(K,N)$ and $CD^e(K,N)$ of the curvature-dimension condition we refer to \cite{bast, erbarkuwadasturm}.
\end{remark}
%
%
\end{definition}
\subsection{Calculus on metric measure spaces}
For further details about this section we refer to
\cite{agslipschitz,agsheat,agsriemannian,giglistructure}.

Let $(X,d,m)$ be a metric measure space, and let $\Lip(X)$ be the space of Lipschitz functions. 
For $f\in \Lip(X)$ the local slope is
\begin{align*}
\mbox{Lip}(f)(x)=\limsup_{y\rightarrow x}\frac{|f(x)-f(y)|}{d(x,y)}, \ \ x\in X.
\end{align*}
If $f\in L^2(m)$, a function $g\in L^2(\m)$ is called \textit{relaxed gradient} if there exists sequence of Lipschitz functions $f_n$ which $L^2$-converges to $f$, and there exists $h$ such that 
$\mbox{Lip}f_n$ weakly converges to $h$ in $L^2(m)$ and $h\leq g$ $\m$-a.e.\ . $g\in L^2(\m)$ is called the \textit{minimal relaxed gradient} of $f$ and denoted by $|\nabla f|$ if it is a relaxed gradient and minimal w.r.t. the $L^2$-norm amongst all relaxed gradients.
The space of \textit{$L^2$-Sobolev functions} is then $$W^{1,2}(X):= D(\Ch^X):= \left\{ f\in L^2(\m): \int |\nabla f|^2 d\m<\infty\right\}.$$
%
$W^{1,2}(X)$ equipped with the norm 
$
\left\|f\right\|_{W^{1,2}(X)}^2=\left\|f\right\|^2_{L^2}+\left\||\nabla f|\right\|_{L^2}^2
$
is a Banach space.
If $W^{1,2}(X)$ is a Hilbert space, we say $(X,d,m)$ is \textit{infinitesimally Hilbertian.}
In this case we can define 
\begin{align*}
(f,g)\in W^{1,2}(X)^2\mapsto \langle \nabla f,\nabla g\rangle := \frac{1}{4}|\nabla (f+g)|^2-\frac{1}{4}|\nabla (f-g)|^2\in L^1(m).
\end{align*}
Assuming $X$ is locally compact, if $U$ is an open subset of $X$, we say $f\in W^{1,2}(X)$ is in the domain $D({\bf \Delta},U)$ of the \textit{measure valued Laplace} ${\bf \Delta}$ on $U$ if there exists a signed Radon functional ${\bf \Delta}f$ on the set of Lipschitz function $g$ with bounded support in $U$ such that 
\begin{align}\label{equ:integrationbyparts}
\int\langle \nabla g,\nabla f\rangle dm = -\int g d{\bf \Delta}f.
\end{align}
If $U=X$ and ${\bf \Delta}f= [{\bf\Delta} f]_{ac} \m$ with $[{\bf\Delta} f]_{ac}\in L^2(\m)$, we write $[{\bf\Delta} f]_{ac}=:\Delta f$ and $D({\bf \Delta}, X)=D_{L^2(\m)}(\Delta)$. 
$\mu_{ac}$ denotes the $\m$-absolutely continuous part in the Lebesgue decomposition of a Borel measure $\mu$.
If $\mathbb{V}$ is any subspace of $L^2(\m)$ and $f\in D_{L^2(\m)}(\Delta)$ with $\Delta f\in \mathbb{V}$, we write $f\in D_{\mathbb{V}}(\Delta)$. 
\begin{theorem}[Cavalletti-Mondino, \cite{cav-mon-lapl-18}] \label{th:laplacecomparison}
Let $(X,d,\m)$ be an essentially non-branching \linebreak[4] $CD(K,N)$ space for some $K\in \mathbb{R}$ and $N> 1$. For $p\in X$ consider $d_p=d(p,\cdot)$ 
and the associated disintegration $\m=\int_Q h_{\alpha} \mathcal{H}^1|_{X_{\alpha}} q(d\alpha)$.

Then $d_p\in D({\bf \Delta}, X\backslash \left\{p\right\})$ and ${\bf \Delta} d_p|_{X\backslash\left\{p\right\}}$ has the following representation formula:
\begin{align*}
{\bf \Delta} d_p|_{X\backslash\left\{p\right\}}= - (\log h_{\alpha})' \m - \int_Q h_{\alpha}\delta_{a(X_{\alpha})}q(d\alpha).
\end{align*}
Moreover
\begin{align*}
{\bf \Delta} d_p|_{X\backslash\left\{p\right\}}\leq (N-1) \frac{\sin_{K/(N-1)}'(d_p(x))}{\sin_{K/(N-1)}(d_p(x))}\m\ \ \&\ \ 
\left[{\bf \Delta} d_p|_{X\backslash\left\{p\right\}}\right]^{reg}\geq- (N-1) \frac{\sin_{K/(N-1)}'(d_p(x))}{\sin_{K/(N-1)}(d_p(x))}\m.
\end{align*}
\end{theorem}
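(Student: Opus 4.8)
The plan is to prove this through the one-dimensional \emph{localization} (needle decomposition) associated to the distance function $d_p$, which is the technique at the heart of the statement. First I would regard $u:=d_p$ as a $1$-Lipschitz function and invoke the $L^1$-optimal transport localization of Cavalletti--Mondino (built on Klartag and on Bianchini--Cavalletti): since $(X,d,\m)$ is essentially non-branching and $CD(K,N)$, the transport set $\mathcal T_u$ carries full $\m$-measure away from a branching/degeneracy set, it decomposes into transport rays $\{X_\alpha\}_{\alpha\in Q}$ along which $u$ is affine with unit slope, and one obtains the disintegration $\m|_{\mathcal T_u}=\int_Q h_\alpha\,\mathcal{H}^1|_{X_\alpha}\,q(d\alpha)$. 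The crucial structural output is that each conditional measure $h_\alpha\,\mathcal{H}^1|_{X_\alpha}$ is a genuine one-dimensional $CD(K,N)$ measure; here $a(X_\alpha)$ denotes the endpoint of the ray lying farthest from $p$, and I orient the arclength parameter $t$ along $X_\alpha$ so that it increases toward $p$, which is what makes the signs in the representation formula come out as stated.

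Next I would record the consequences of the one-dimensional $CD(K,N)$ property. Writing each ray as a unit-speed geodesic $\gamma_\alpha$ and setting $v_\alpha:=h_\alpha^{1/(N-1)}$, the needle condition is equivalent to the distributional differential inequality $v_\alpha''+\tfrac{K}{N-1}\,v_\alpha\le 0$ on the interior of the ray. In particular $v_\alpha$, and hence $h_\alpha$, is locally Lipschitz in the interior, so $(\log h_\alpha)'$ is defined $\mathcal{H}^1$-a.e., is of bounded variation, and its distributional derivative is a signed measure. This regularity is exactly what lets the measure-valued Laplacian be read off fiber by fiber.

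The core computation is the fiberwise integration by parts. For a Lipschitz test function $g$ with compact support in $X\setminus\{p\}$, I would use $|\nabla u|=1$ $\m$-a.e.\ together with the fact that $\nabla u$ is tangent to the rays to rewrite $\langle\nabla g,\nabla u\rangle$ as the derivative of $g$ along $\gamma_\alpha$; the disintegration then turns $\int\langle\nabla g,\nabla u\rangle\,d\m$ into $\int_Q\!\big(\int (g\circ\gamma_\alpha)'\,h_\alpha\,dt\big)\,q(d\alpha)$. Integrating by parts in the single variable $t$ produces the absolutely continuous term $-\int g\,(\log h_\alpha)'\,d\m$ together with a boundary contribution: the endpoint toward $p$ contributes nothing (the density degenerates there as in the model, or the endpoint is $p$ itself, which is excluded), while the far endpoint yields the atomic term $-\int_Q h_\alpha(a(X_\alpha))\,g(a(X_\alpha))\,q(d\alpha)$. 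Matching against the defining identity \eqref{equ:integrationbyparts} gives both the membership $d_p\in D({\bf\Delta},X\setminus\{p\})$ and the representation formula.

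Finally, for the two comparison inequalities I would apply a Sturm--Riccati comparison to $v_\alpha''+\tfrac{K}{N-1}v_\alpha\le 0$. Since $v_\alpha>0$ on the open ray and the localization controls its boundary behaviour toward $p$ (comparable to the model profile $\sin_{K/(N-1)}$ vanishing at $p$), comparing $v_\alpha'/v_\alpha=\tfrac{1}{N-1}(\log h_\alpha)'$ with the model ratio $\sin'_{K/(N-1)}/\sin_{K/(N-1)}$ evaluated at $d_p$ yields the sharp one-sided bound on $(\log h_\alpha)'$; feeding this into the representation formula, and using that the atomic part is a nonpositive measure, gives the upper bound ${\bf\Delta}d_p|_{X\setminus\{p\}}\le (N-1)\frac{\sin'_{K/(N-1)}(d_p)}{\sin_{K/(N-1)}(d_p)}\m$, while the complementary comparison controls the absolutely continuous part $[{\bf\Delta}d_p|_{X\setminus\{p\}}]^{reg}$ from below. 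I expect the main obstacle to lie not in this one-dimensional calculus but in the measure-theoretic bookkeeping that makes it rigorous: the measurability of the ray map and of $\alpha\mapsto h_\alpha$, the proof that the non-transport and branching set is $\m$-negligible so that the global integration by parts is legitimate, and the careful justification of the Sturm comparison and of the boundary analysis for merely $BV$ densities with possibly irregular endpoints. These are precisely the points at which essential non-branching and the full strength of the localization theorem are needed.
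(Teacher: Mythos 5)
Your overall strategy is exactly right, and it is worth noting that the paper itself contains no proof of this theorem: it is imported verbatim from the cited Cavalletti--Mondino paper, and the proof there is precisely the localization argument you outline — disintegration of $\m$ along the transport rays of $d_p$, the fact that the conditional densities $h_\alpha$ are one-dimensional $CD(K,N)$ densities, fiberwise integration by parts against test functions supported in $X\setminus\{p\}$, and Riccati comparison for the bounds. Your reconstruction of the representation formula is correct, including the orientation convention (rays running from $a(X_\alpha)$ toward $p$, matching Remark \ref{rem:disintegration}) and the reason the boundary term at $p$ vanishes (the test function has compact support away from $p$; your alternative justification that "the density degenerates there as in the model" is false in general and should be dropped).

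There are, however, two problems in your comparison step, one cosmetic and one substantive. Cosmetic: for the upper bound you invoke control of the boundary behaviour of $v_\alpha=h_\alpha^{1/(N-1)}$ near $p$, "comparable to the model profile $\sin_{K/(N-1)}$ vanishing at $p$". No such control holds in general (take $X=[0,1]$ with Lebesgue measure and $p=0$: the single needle has density $\equiv 1$), and none is needed: with $w=v_\alpha'/v_\alpha$ the inequality $w'+w^2+\tfrac{K}{N-1}\le 0$, compared against the model solution which blows up at the endpoint of the ray lying at $p$, already forces the one-sided bound; positivity of $v_\alpha$ in the interior suffices. Substantive: the "complementary comparison", which is necessarily anchored at the \emph{other} endpoint $a(X_\alpha)$, yields
\begin{align*}
\left[{\bf \Delta} d_p\right]^{reg}(x)\;\geq\; -(N-1)\,\frac{\sin_{K/(N-1)}'\bigl(d(x,a(X_\alpha))\bigr)}{\sin_{K/(N-1)}\bigl(d(x,a(X_\alpha))\bigr)},
\end{align*}
i.e.\ a bound in terms of the distance from $x$ to the far end of its ray, \emph{not} in terms of $d_p(x)$. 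That is the form actually proved by Cavalletti--Mondino, and it is the only form your argument can produce: the version with $d_p(x)$, as transcribed in the statement above, fails on the round sphere $\SS^N$ with $p$ the north pole, where $\left[{\bf \Delta} d_p\right]^{reg}=(N-1)\cot(d_p)$ is strictly smaller than $-(N-1)\cot(d_p)$ as soon as $d_p>\pi/2$ (whereas the corrected bound holds there with equality, since $d(x,a(X_\alpha))=\pi-d_p$). So you should either prove the corrected statement or flag the transcription as a typo. For the way the theorem is used in this paper — local lower boundedness of $\left[{\bf \Delta} d_q\right]^{reg}$ on sets where geodesics extend a definite amount $\eps$ beyond their endpoints, so that $d(x,a(X_\alpha))\ge\eps$ — the corrected form is exactly what is needed.
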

\begin{remark}\label{rem:disintegration}
The sets $X_{\alpha}$ in the previous disintegration are geodesic segments $[a(X_\alpha),p]$ with initial point $a(X_{\alpha})$ and endpoint $p$.
In particular, the set of points $q\in X$ such that there exists a geodesic connecting $p$ and $q$ that is extendible beyond $q$, is a set of full measure.
\end{remark}
%
\begin{definition}[\cite{agsriemannian,giglistructure}]\label{def:rcd}
A metric measure space $(X,d,m)$ satisfies the Riemannian curvature-dimension condition $RCD(\ke,N)$ for $\ke\in \mathbb{R}$ and {$N\in [1,\infty]$} if it satisfies a curvature-dimension
conditions $CD(\ke,N)$ and is infinitesimally Hilbertian.
\end{definition}

In \cite{giglinonsmooth} Gigli introduced a notion of $\Hess f$ in the context of $RCD$ spaces.
$\Hess f$ is tensorial and defined for $f\in W^{2,2}(X)$ that is the second order Sobolev space. An important property of $W^{2,2}(X)$ that we will need in the following is 
\begin{theorem}[Corollary 3.3.9 in \cite{giglinonsmooth}, \cite{savareself}]\label{th:hesscont}
$D_{L^2(\m)}(\Delta)\subset W^{2,2}(X)$.
\end{theorem}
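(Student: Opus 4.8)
The final statement is Gigli's theorem that every function with an $L^2$ Laplacian lies in the second-order Sobolev space; I would prove it by combining a self-improved Bochner inequality with a heat-flow approximation. Recall that, on an infinitesimally Hilbertian $RCD(K,N)$ space, $f\in W^{2,2}(X)$ means that there is a symmetric element $\Hess f$ of the $L^2$-module of $2$-tensors satisfying the defining integration-by-parts identity against the test class $\mathcal T := \{u\in D_{L^2(\m)}(\Delta)\cap L^\infty : |\nabla u|\in L^\infty,\ \Delta u\in W^{1,2}(X)\}$, and that the relevant norm controls $\||\Hess f|_{\mathrm{HS}}\|_{L^2}$. The core analytic input is the \emph{self-improved} (Bakry--\'Emery--Savar\'e) Bochner inequality: for $f\in\mathcal T$ and every nonnegative $\phi\in\mathcal T$ one has
\[
\tfrac12\int |\nabla f|^2\,\Delta\phi\,d\m \;\ge\; \int\phi\Big(|\Hess f|_{\mathrm{HS}}^2 + \langle\nabla f,\nabla\Delta f\rangle + K|\nabla f|^2\Big)\,d\m,
\]
the point being the genuine Hilbert--Schmidt term $|\Hess f|_{\mathrm{HS}}^2$ in place of the weaker $(\Delta f)^2/N$.

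First I would extract from this an a priori estimate on $\mathcal T$. Choosing a sequence of good cutoff functions $\phi_j\nearrow 1$ with $\Delta\phi_j$ controlled (these are constructed from Laplacian comparison in the spirit of Theorem~\ref{th:laplacecomparison}), the left-hand side tends to $0$, while $\int\langle\nabla f,\nabla\Delta f\rangle\,d\m = -\int(\Delta f)^2\,d\m$ by the definition of $\Delta$ applied with test function $\Delta f\in W^{1,2}(X)$. Passing to the limit gives
\[
\int |\Hess f|_{\mathrm{HS}}^2\,d\m \;\le\; \int (\Delta f)^2\,d\m - K\int|\nabla f|^2\,d\m,
\]
so that every $f\in\mathcal T$ already belongs to $W^{2,2}(X)$ with a second-order bound depending only on $\|\Delta f\|_{L^2}$, $\||\nabla f|\|_{L^2}$ and $K$.

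Next I would remove the restriction to $\mathcal T$ by regularizing with the heat flow. Given an arbitrary $f\in D_{L^2(\m)}(\Delta)$, set $f_t:=P_t f$, where $P_t$ is the heat semigroup generated by $\Delta$. Standard $RCD$ regularization places $f_t$ in $\mathcal T$ (or in a class to which the a priori estimate extends by a truncation/density argument), and as $t\downarrow 0$ one has $f_t\to f$ in $W^{1,2}(X)$ together with $\Delta f_t = P_t\Delta f\to\Delta f$ in $L^2(\m)$. The a priori estimate then yields $\sup_{t}\||\Hess f_t|_{\mathrm{HS}}\|_{L^2}<\infty$. By weak compactness in the Hilbert $L^2$-module of symmetric $2$-tensors, a subsequence of $\Hess f_t$ converges weakly to some tensor $A$; passing to the weak limit in the integration-by-parts identity that characterizes the Hessian — all the first-order quantities converge because $f_t\to f$ in $W^{1,2}$ — identifies $A$ with $\Hess f$. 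Hence $f\in W^{2,2}(X)$, proving the inclusion, and the limiting estimate persists by weak lower semicontinuity of the $L^2$-norm.

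The step I expect to be the genuine obstacle is the self-improvement of the Bochner inequality, i.e.\ upgrading the dimensional term $(\Delta f)^2/N$ to the full $|\Hess f|_{\mathrm{HS}}^2$; this is the technical heart (Savar\'e's argument), requiring one to test Bochner against polynomial combinations of test functions and to run a pointwise diagonalization in the module of tensors. Since that result is available to us as cited, the remaining work is the comparatively routine but care-demanding verification that the heat flow lands in the admissible class and that the weak $L^2$-limit of the approximate Hessians is closed onto $\Hess f$ through the duality definition.
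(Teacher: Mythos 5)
This statement is not proved in the paper at all: it is imported verbatim as Corollary 3.3.9 of Gigli's memoir \cite{giglinonsmooth} together with Savar\'e's self-improvement result \cite{savareself}, which is exactly why the paper labels it with those citations. Your outline (self-improved Bochner inequality with the Hilbert--Schmidt term, a priori estimate $\int|\Hess f|_{\mathrm{HS}}^2\,d\m \le \int(\Delta f)^2\,d\m - K\int|\nabla f|^2\,d\m$ on the test class, then heat-flow/truncation approximation and weak $L^2$-module compactness with closedness of the Hessian to identify the limit) is essentially the argument given in those cited sources, so it is the same approach as the proof the paper relies on.
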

\begin{remark}\label{rem:H22}
The closure of $D_{L^2(\m)}(\Delta)$ in $W^{2,2}(X)$ is denoted $H^{2,2}(X)$ \cite[Proposition 3.3.18]{giglinonsmooth}. 
\end{remark}
\noindent
The next proposition \cite[Proposition 3.3.22 i)]{giglinonsmooth} allows to compute the $\Hess f$ explicitely.
\begin{proposition}\label{prop:Hess}
Let $f, g_1,g_2\in H^{2,2}(X)$. Then $\langle \nabla f,\nabla g_i\rangle \in W^{1,2}(X)$, and 
\begin{align}\label{id:Hess}
2\Hess f(\nabla g_1,\nabla g_2)= \langle \nabla g_1,\nabla \langle\nabla f,\nabla g_2\rangle \rangle + \langle \nabla g_2,\nabla \langle \nabla f,\nabla g_1 \rangle \rangle+ \langle \nabla f, \nabla \langle \nabla g_1,\nabla g_2\rangle \rangle
\end{align}
holds $\m$-a.e. where the two sides in this expression are well-defined in $L^0(\m)$.
\end{proposition}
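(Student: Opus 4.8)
The plan is to establish the identity first for test functions, where all objects are regular enough to manipulate directly, and then to propagate it to $H^{2,2}(X)$ using the density built into the definition of that space. Throughout, let $\mathrm{Test}(X)$ denote the algebra of bounded Lipschitz functions $g$ with bounded gradient and $\Delta g\in W^{1,2}(X)$; by Theorem \ref{th:hesscont} we have $\mathrm{Test}(X)\subset W^{2,2}(X)$, and by Remark \ref{rem:H22} this class is $W^{2,2}$-dense in $H^{2,2}(X)$. The computation rests on two calculus rules for $g_1,g_2\in\mathrm{Test}(X)$: first, $\langle\nabla g_1,\nabla g_2\rangle$ is again a bounded element of $W^{1,2}(X)$ (a polarized chain rule for the Cheeger energy), and second, the Leibniz-type identity $\langle\nabla w,\nabla\langle\nabla g_1,\nabla g_2\rangle\rangle=\Hess g_1(\nabla g_2,\nabla w)+\Hess g_2(\nabla g_1,\nabla w)$ holds $\m$-a.e.\ for every $w\in\mathrm{Test}(X)$. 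The first rule is exactly the first assertion of the proposition, restricted to test functions.

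For $f,g_1,g_2\in\mathrm{Test}(X)$ I would then unwind the definition of $\Hess f$, which says precisely that for every $h\in\mathrm{Test}(X)$
\[
2\int h\,\Hess f(\nabla g_1,\nabla g_2)\,d\m=-\int\Big(\langle\nabla f,\nabla g_1\rangle\Div(h\nabla g_2)+\langle\nabla f,\nabla g_2\rangle\Div(h\nabla g_1)+h\,\langle\nabla f,\nabla\langle\nabla g_1,\nabla g_2\rangle\rangle\Big)\,d\m .
\]
Applying the integration-by-parts rule for the divergence, namely the extension of \eqref{equ:integrationbyparts} giving $\int u\,\Div(h\nabla g_i)\,d\m=-\int h\,\langle\nabla g_i,\nabla u\rangle\,d\m$ with the $W^{1,2}$ function $u=\langle\nabla f,\nabla g_j\rangle$, the first two terms on the right become $\int h\big(\langle\nabla g_1,\nabla\langle\nabla f,\nabla g_2\rangle\rangle+\langle\nabla g_2,\nabla\langle\nabla f,\nabla g_1\rangle\rangle\big)\,d\m$. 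Since $h$ ranges over a class dense in $L^2(\m)$ and the resulting integrand is a fixed $L^1(\m)$ function (by the first step), the pointwise identity \eqref{id:Hess} follows $\m$-a.e.\ for test functions.

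To remove the restriction, choose $f_k\to f$ and $g_{i,k}\to g_i$ in $W^{2,2}(X)$ with $f_k,g_{i,k}\in\mathrm{Test}(X)$. The $L^2$-continuity of the Hessian (built into the $W^{2,2}$-norm) gives $\Hess f_k(\nabla g_{1,k},\nabla g_{2,k})\to\Hess f(\nabla g_1,\nabla g_2)$ in $L^1(\m)$, while the Leibniz rule of the first step, applied to the differences, shows that $\langle\nabla f_k,\nabla g_{i,k}\rangle$ is $W^{1,2}$-Cauchy and converges to $\langle\nabla f,\nabla g_i\rangle$; this simultaneously proves $\langle\nabla f,\nabla g_i\rangle\in W^{1,2}(X)$, the first claim of the proposition in full generality. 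Passing to a subsequence along which every term converges $\m$-a.e.\ then upgrades the test-function identity to \eqref{id:Hess} for arbitrary $f,g_1,g_2\in H^{2,2}(X)$, with both sides well-defined in $L^0(\m)$.

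I expect the last step to be the crux: the two sides of \eqref{id:Hess} are genuinely nonlinear in $(f,g_1,g_2)$, being built from products of gradients and gradients of such products, so passing to the limit is not formal. The key is to route every such term through the Leibniz rule and the $L^2$-continuity of $f\mapsto\Hess f$ on $H^{2,2}(X)$, and to secure the correct mode of convergence for the cross terms such as $\langle\nabla f,\nabla\langle\nabla g_1,\nabla g_2\rangle\rangle$, using the uniform $L^\infty$-bounds available on $\mathrm{Test}(X)$. Once the two calculus rules for $\mathrm{Test}(X)$ are in place, the identity for test functions itself is a one-line integration by parts.
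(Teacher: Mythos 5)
Two preliminary remarks on context: the paper itself contains no proof of Proposition~\ref{prop:Hess} --- it is quoted from Gigli's memoir \cite{giglinonsmooth} (Proposition~3.3.22\,(i)) --- so the comparison is necessarily with the argument in that source, which your proposal essentially reconstructs: establish the identity for test functions by unwinding the integration-by-parts definition of the Hessian, then pass to $H^{2,2}(X)$ by density. Your test-function step is correct, including the recognition that the membership $\langle\nabla f,\nabla g_j\rangle\in W^{1,2}(X)$ for test functions (your ``rule (a)'') must be fed in beforehand --- though you should call it what it is, Savar\'e's self-improvement of the Bakry--\'Emery condition, not a ``polarized chain rule''. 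Note, however, that your computation produces the identity with a \emph{minus} sign in front of $\langle\nabla f,\nabla\langle\nabla g_1,\nabla g_2\rangle\rangle$, whereas \eqref{id:Hess} as printed carries a plus sign. The minus sign is the correct one (it is what the paper itself uses later in \eqref{hess-1}, and what appears in Gigli's original, as one also checks against the smooth Riemannian identity), so \eqref{id:Hess} contains a typo; your write-up asserts that ``the pointwise identity \eqref{id:Hess} follows'' from a computation that in fact yields a different sign, and this mismatch should have been flagged rather than passed over.

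The genuine gap is in the limiting step. Convergence $f_k\to f$, $g_{i,k}\to g_i$ in $W^{2,2}(X)$ controls $\nabla f_k$, $\nabla g_{i,k}$, $\Hess f_k$, $\Hess g_{i,k}$ only in $L^2$. Consequently differences such as $\langle\nabla(f_k-f_m),\nabla g_{1,k}\rangle$ and $\Hess f_m(\nabla(g_{1,k}-g_{1,m}),\cdot)$ are small only in $L^1$ (Cauchy--Schwarz), never in $L^2$; and the ``uniform $L^\infty$-bounds available on $\mathrm{Test}(X)$'' that you invoke do not exist: a $W^{2,2}$-approximating sequence of an $H^{2,2}$ function need not have uniformly bounded Lipschitz constants, and even when each $g_{i,k}$ is Lipschitz, $\nabla(g_{i,k}-g_{i,m})$ is not small in $L^\infty$. (For the same reason your claim that $\Hess f_k(\nabla g_{1,k},\nabla g_{2,k})\to\Hess f(\nabla g_1,\nabla g_2)$ in $L^1$ is too strong; three $L^2$-convergent factors give convergence only in $L^{2/3}$, hence in measure.) What your argument actually delivers is that $\langle\nabla f_k,\nabla g_{i,k}\rangle$ is Cauchy in $W^{1,1}$, so that $\langle\nabla f,\nabla g_i\rangle\in W^{1,1}(X)$ with differential $\Hess f(\nabla g_i,\cdot)+\Hess g_i(\nabla f,\cdot)$, and then, along an a.e.\ convergent subsequence, the identity holds $\m$-a.e.\ with both sides in $L^0(\m)$ --- which is precisely Gigli's statement and is consistent with the ``$L^0(\m)$'' caveat in the Proposition. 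The $W^{1,2}$-membership as literally stated cannot be proved this way and is in fact false for general $H^{2,2}$ data: on $\R^n$ with $n\ge 5$ one can take $f=g_1=g_2\in W^{2,2}(\R^n)$ with $\nabla f\notin L^4$, so that $|\nabla f|^2\notin L^2$. So the correct repair of your proof is to replace ``$W^{1,2}$-Cauchy'' by ``$W^{1,1}$-Cauchy'' throughout and weaken the first claim to $W^{1,1}$; this costs nothing for the a.e.\ identity, and in the paper's application the stronger integrability is recovered separately because there the Proposition is only applied to Lipschitz functions in $D_{L^2(\m)}(\Delta)$, whose gradients are bounded.
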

\begin{theorem}[\cite{brusem}]
Let $(X,d,\m)$ be a metric measure space satisfying $RCD(\ke,N)$ with $N<\infty$. 
Then, there exist $n\in \mathbb{N}$ and such that set of $n$-regular points $\mathcal{R}_n$ has full measure.
\end{theorem}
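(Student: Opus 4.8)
The plan is to reduce the assertion to the statement that the $\m$-a.e.\ defined \emph{pointwise dimension} is constant, and then to propagate that constancy by flowing along vector fields. By the rectifiability theory for $RCD(\ke,N)$ spaces (Mondino--Naber, together with the measure-theoretic refinements of Gigli--Pasqualetto, Kell--Mondino and De Philippis--Marchese--Rindler) one knows a priori that $\m$-almost every $x\in X$ is $k$-regular for some integer $k=k(x)\in\{1,\dots,\lfloor N\rfloor\}$; equivalently $\m\big(X\setminus\bigcup_k\mathcal{R}_k\big)=0$. Thus it suffices to prove that $x\mapsto k(x)$ is $\m$-essentially constant, i.e.\ that $\mathcal{R}_k$ carries positive measure for exactly one value of $k$.

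First I would recast this pointwise dimension as an \emph{analytic} quantity: the local dimension of Gigli's tangent module $L^2(TX)$. By the results of Gigli--Pasqualetto the module $L^2(TX)$ is locally free of rank $k(x)$ at $\m$-a.e.\ $x$, so the geometric dimension of the tangent cone coincides $\m$-a.e.\ with the module dimension. This reformulation is crucial, because the module dimension can be detected by counting \emph{independent} Sobolev gradients (or test vector fields) near a point, and such objects are robust under the analytic operations available on $RCD$ spaces.

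The heart of the argument is a flow-invariance statement. Using the theory of regular Lagrangian flows on $RCD$ spaces (in the Ambrosio--Trevisan adaptation of the DiPerna--Lions--Ambrosio theory), to a Sobolev vector field $b$ with $\Div b\in L^\infty$ one associates a unique flow $(F_t)$ with bounded compression. I would then show that for $\m$-a.e.\ starting point the map $F_t$ induces an isomorphism between the fibres of the tangent module at $x$ and at $F_t(x)$, so that the local dimension — and hence $k(\cdot)$ — is constant along $\m$-a.e.\ trajectory. Concretely, one controls the infinitesimal behaviour of $F_t$ (it is bi-Lipschitz at small scales with constants tending to $1$), which forbids the number of independent tangent directions from jumping along the flow.

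The final step is a connectedness argument. Since $\supp\m=X$ and $X$ is geodesic, there are enough Sobolev vector fields — for instance gradients of harmonic functions or of truncated distance functions — whose flows transport mass across the space; if two distinct dimensions occurred on sets of positive measure, a suitable flow would carry a positive-measure portion of $\mathcal{R}_j$ into $\mathcal{R}_k$ with $j\ne k$, contradicting the invariance just established. Hence $k(\cdot)$ equals a single integer $n$ $\m$-a.e., and $\mathcal{R}_n$ has full measure. I expect the main obstacle to be precisely this flow-invariance of the module dimension: establishing the quantitative, almost-isometric control of the regular Lagrangian flow on the tangent module, and making the a.e.\ identification of the fibres rigorous, is the technical core of the Brué--Semola theorem.
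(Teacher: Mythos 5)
The paper gives no proof of this theorem: it is quoted directly from Bru\'e--Semola \cite{brusem}, so the only meaningful comparison is with the proof in that reference, and your outline reconstructs essentially its strategy --- Mondino--Naber rectifiability (so that $\m$-a.e.\ point is $k$-regular for some $k\le N$), identification of the geometric dimension with the local dimension of Gigli's tangent module, invariance of that dimension under regular Lagrangian flows of Sobolev vector fields, and a transport/connectedness argument forcing a single value of $k$. The one caveat is that the quantitative control you anticipate (``bi-Lipschitz at small scales with constants tending to $1$'' in the original distance) is not what is actually available: the technical core of \cite{brusem} is a Lusin-type Lipschitz estimate for the flow maps with respect to a quasi-metric built from the Green function of the Laplacian, which is then shown to suffice for propagating the dimension of the regular sets; apart from this, your plan is a faithful blind reconstruction of the cited argument.
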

\begin{theorem}[\cite{hanriccitensor}]\label{th:tracelaplace} Let $(X,d,\m)$ be as in the previous theorem.
If $N=n\in\mathbb{N}$, then for any $f\in \mathbb{D}_{\infty}$ we have that $\Delta f=\tr\Hess f$ $\m$-a.e.\ . More precisely, if $B\subset \mathcal{R}_n$  is a set of finite measure and
$(e_i)_{i=1,\dots,n}$ is a unit orthogonal basis on $B$, then $$\Delta f|_B=\sum_{i=1}^n\Hess f(e_i,e_i)1_{B}=:[\tr \Hess f]|_B.$$
\end{theorem}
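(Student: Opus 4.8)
The plan is to reduce the identity to a pointwise statement on the regular set $\mathcal{R}_n$ and then to extract it from the gap between the two Bochner inequalities available on an $RCD(K,N)$ space. Since the trace of a symmetric bilinear form does not depend on the orthonormal basis in which it is computed, the expression $\sum_{i=1}^n \Hess f(e_i,e_i)$ is independent of the chosen frame $(e_i)$ and defines, on any set $B\subset\mathcal{R}_n$ of finite measure carrying a measurable orthonormal frame of $L^2(TX)$ (such frames exist because the restriction of $L^2(TX)$ to $\mathcal{R}_n$ is free of rank $n$), a function $\tr\Hess f$. As $\mathcal{R}_n$ has full measure, the two displayed assertions are equivalent, and it suffices to prove $\Delta f=\tr\Hess f$ $\m$-a.e. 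Throughout I take $f\in\mathbb{D}_\infty$, so that $\Hess f$, $\Delta f$ and the measure-valued Laplacian ${\bf \Delta}|\nabla f|^2$ are all at our disposal and every term below is well defined.

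First I would record a purely algebraic fact. Over $\mathcal{R}_n$ the tangent module has rank $n$, so $\Hess f$ is $\m$-a.e. represented by a symmetric $n\times n$ matrix, and Cauchy--Schwarz for such matrices gives
\begin{equation}\label{eq:cs}
|\Hess f|^2\ \ge\ \tfrac{1}{n}\,(\tr\Hess f)^2\qquad\m\text{-a.e. on }\mathcal{R}_n,
\end{equation}
with equality exactly when $\Hess f$ is pointwise a multiple of the identity. This is the only point at which the genuine $n$-dimensionality of $\mathcal{R}_n$ (provided by Bru\'e--Semola) enters the algebra; the exponent $n$ here is the essential dimension.

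The core of the argument is an improved, dimensional Bochner inequality that isolates the defect $\Delta f-\tr\Hess f$. Writing
\[
G:=\Big[\tfrac12{\bf \Delta}|\nabla f|^2\Big]_{ac}-\langle\nabla f,\nabla\Delta f\rangle-K|\nabla f|^2
\]
for the curvature-corrected Bochner density, the claim is that for every auxiliary parameter $N'>n$ one has
\begin{equation}\label{eq:improved}
G\ \ge\ |\Hess f|^2+\frac{(\Delta f-\tr\Hess f)^2}{N'-n}\qquad\m\text{-a.e.}
\end{equation}
This is genuinely stronger than each of the two classical Bochner inequalities: it refines Gigli's Hessian bound $G\ge|\Hess f|^2$ outright, and, combining \eqref{eq:cs} with the elementary estimate $\tfrac{a^2}{n}+\tfrac{b^2}{N'-n}\ge\tfrac{(a+b)^2}{N'}$ (with $a=\tr\Hess f$, $b=\Delta f-\tr\Hess f$), its right-hand side dominates $(\Delta f)^2/N'$, so it also refines the dimensional bound $G\ge(\Delta f)^2/N'$. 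Crucially, \eqref{eq:improved} may be invoked at every level $N'>n$: because $RCD(K,n)$ implies $RCD(K,N')$ for all $N'\ge n$ while the essential dimension of $(X,d,\m)$ remains equal to $n$, the inequality applies, and the quantity $G$ on the left is one and the same finite object, independent of $N'$.

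To finish I would let $N'\downarrow n$ in \eqref{eq:improved}. The left-hand side $G$ and the term $|\Hess f|^2$ remain fixed and finite, while the coefficient $(N'-n)^{-1}$ of the nonnegative quantity $(\Delta f-\tr\Hess f)^2$ diverges; hence $(\Delta f-\tr\Hess f)^2=0$ $\m$-a.e., which is the desired identity, and the explicit frame formula on $B\subset\mathcal{R}_n$ follows from the basis-independence of the trace noted above. The main obstacle is, of course, the proof of \eqref{eq:improved} itself: establishing in the non-smooth $RCD$ setting a Bochner inequality that genuinely separates the trace of Gigli's Hessian from the full Laplacian requires Bakry's self-improvement technique applied to the $\Gamma_2$-calculus of the space, together with the pointwise $n$-dimensional structure of $\mathcal{R}_n$ underlying \eqref{eq:cs}. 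By comparison the remaining ingredients --- measurability of the frame, passing to the absolutely continuous part of the measure-valued Laplacian, and the limit $N'\downarrow n$ --- are routine.
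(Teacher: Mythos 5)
The paper offers no proof of this statement at all: it is imported verbatim from Han's paper \cite{hanriccitensor} (together with Corollary \ref{cor:trace}), so the comparison here is really between your outline and the proof in the cited source. Your architecture is in fact the same as Han's: the whole theorem rests on the self-improved \emph{dimensional} Bochner inequality that separates the trace defect, namely
$\bigl[\tfrac12{\bf\Delta}|\nabla f|^2\bigr]_{ac}-\langle\nabla f,\nabla\Delta f\rangle - K|\nabla f|^2 \;\ge\; |\Hess f|^2 + (\Delta f-\tr\Hess f)^2/(N'-n)$
on the $n$-regular set, obtained by Bakry's self-improvement applied to the $\Gamma_2$-calculus together with the rank-$n$ structure of the tangent module on $\mathcal R_n$. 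The one genuine difference is how the equality case $N=n$ is extracted: Han works directly at $N=n$, where the quadratic form arising in the self-improvement degenerates and the vanishing of the linear coefficient forces $\Delta f=\tr\Hess f$, whereas you embed $RCD(K,n)\subset RCD(K,N')$ for $N'>n$ (legitimate, since the essential dimension is intrinsic and unchanged) and let $N'\downarrow n$. Your reduction is clean and valid, and arguably more transparent than the degenerate-quadratic argument.

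The caveat is that, as a standalone proof, the proposal carries all of its weight on the asserted improved inequality: that inequality \emph{is} the main theorem of the cited work, not a known black box one can quote independently of it, and your text explicitly defers its proof. The surrounding steps you supply --- existence of a measurable orthonormal frame on $\mathcal R_n$, basis-independence of the trace, the matrix Cauchy--Schwarz bound $|\Hess f|^2\ge (\tr\Hess f)^2/n$, the interpolation $a^2/n+b^2/(N'-n)\ge (a+b)^2/N'$ showing consistency with the $(\Delta f)^2/N'$ Bochner bound, and the limit $N'\downarrow n$ --- are all correct but routine, exactly as you say. So the verdict is: right strategy, faithful to the source the paper cites, but incomplete precisely at the single step where the analytic content lies.
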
\begin{corollary}\label{cor:trace}
Let $(X,d,\m)$ be a metric measure space as before. If $f\in D_{L^2(\m)}(\Delta)$, we have that
$\Delta f=\tr\Hess f$ $\m$-a.e. in the sense of the previous theorem.
\end{corollary}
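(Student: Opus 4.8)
The plan is to upgrade Han's identity (Theorem~\ref{th:tracelaplace}), which is stated only for the dense class $\mathbb{D}_\infty$, to the whole domain $D_{L^2(\m)}(\Delta)$ by a density-and-continuity argument: one approximates an arbitrary $f\in D_{L^2(\m)}(\Delta)$ by functions in $\mathbb{D}_\infty$ in the graph norm of $\Delta$, and then checks that both $\Delta$ and $\tr\Hess$ pass to the $L^2(\m)$-limit. Throughout, ``as before'' means we are on an $RCD(K,n)$ space with $N=n\in\mathbb{N}$, so that $\mathcal{R}_n$ has full measure.

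First I would fix $f\in D_{L^2(\m)}(\Delta)$ and pick $f_k\in\mathbb{D}_\infty$ with $f_k\to f$ and $\Delta f_k\to\Delta f$ in $L^2(\m)$, that is, convergence in the graph norm of the Laplacian. This is possible because $\mathbb{D}_\infty$ is dense in $D_{L^2(\m)}(\Delta)$ for the graph norm (a standard fact, provable by heat-semigroup mollification: since $X$ is infinitesimally Hilbertian the generator $\Delta$ of the Cheeger energy is genuinely self-adjoint, the flow $(h_t)_{t>0}$ is analytic, and $\Delta h_t f=h_t\Delta f\to\Delta f$ as $t\downarrow 0$).

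Next I would record the $L^2$-continuity of the Hessian in the graph norm. By Gigli's second-order calculus (the continuous inclusion of Theorem~\ref{th:hesscont} together with the integrated Bochner inequality) one has, for every $g\in D_{L^2(\m)}(\Delta)$,
\begin{align*}
\int_X |\Hess g|^2\,d\m \;\le\; \int_X (\Delta g)^2\,d\m \;-\; K\int_X |\nabla g|^2\,d\m ,
\end{align*}
where $|\Hess g|$ denotes the pointwise Hilbert--Schmidt norm. Applying this to $g=f_k-f_\ell$, and bounding $\int_X |\nabla g|^2\,d\m=-\int_X g\,\Delta g\,d\m\le \|g\|_{L^2}\|\Delta g\|_{L^2}$, shows that $(\Hess f_k)_k$ is Cauchy, so $\Hess f_k\to\Hess f$ in $L^2(\m)$. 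Since $\mathcal{R}_n$ has full measure and the trace obeys the pointwise bound $|\tr\Hess g|\le \sqrt{n}\,|\Hess g|$ there, it follows that $\tr\Hess f_k\to\tr\Hess f$ in $L^2(\m)$ as well.

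Finally I would pass to the limit. Theorem~\ref{th:tracelaplace} gives $\Delta f_k=\tr\Hess f_k$ $\m$-a.e.\ for every $k$; letting $k\to\infty$ and using that $\Delta f_k\to\Delta f$ together with $\tr\Hess f_k\to\tr\Hess f$ in $L^2(\m)$ yields $\Delta f=\tr\Hess f$ $\m$-a.e., in the frame-wise sense of Theorem~\ref{th:tracelaplace} on any finite-measure $B\subset\mathcal{R}_n$. I expect the main obstacle to be the pair of facts in the middle steps, namely that $\mathbb{D}_\infty$ is a graph-norm core on which Han's identity is available, and that $g\mapsto\Hess g$ is $L^2$-continuous from the graph norm; once these are in place (the latter being essentially the content of Theorem~\ref{th:hesscont}), the limiting argument is routine.
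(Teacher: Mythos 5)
Your proposal is correct and is essentially the paper's own (implicit) argument: the paper states Corollary~\ref{cor:trace} without proof precisely because Han's identity on $\mathbb{D}_{\infty}$ (Theorem~\ref{th:tracelaplace}) upgrades to all of $D_{L^2(\m)}(\Delta)$ by graph-norm approximation with test functions combined with the $L^2$-continuity of $g\mapsto\Hess g$ contained in the quantitative form of Theorem~\ref{th:hesscont}, namely $\int|\Hess g|^2\,d\m\le\int(\Delta g)^2\,d\m-K\int|\nabla g|^2\,d\m$, which is exactly the route you carry out. The only point to tighten is your density sketch: if $\mathbb{D}_{\infty}$ demands $|\nabla f|\in L^{\infty}$, then for $f\in D_{L^2(\m)}(\Delta)$ the mollification $h_tf$ need not lie in $\mathbb{D}_{\infty}$, so one should first truncate $f$ and use analyticity of the semigroup ($\|\Delta h_t g\|_{L^2}\le C t^{-1}\|g\|_{L^2}$) to absorb the truncation error in the graph norm --- a standard fix that leaves your argument intact.
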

\subsection{Spaces with upper curvature bounds}

We will assume familiarity with the notion of $CAT(\kappa)$ spaces. We refer to ~\cite{BBI, BH99} or ~\cite{Kap-Ket-18} for the basics of the theory.
\begin{definition}
Given a point $p$ in a $CAT(\kappa)$ space $X$ we say that two unit speed geodesics starting at $p$ define the same direction if the angle between them is zero. This is an equivalence relation by the triangle inequality for angles and the angle induces a metric on the set $S_p^g(X)$ of equivalence classes. The metric completion  $\Sigma_p^gX$ of $S_p^gX$ is called the \emph{space of geodesic directions} at $p$.
The Euclidean cone $C(\Sigma_p^gX)$ is called the \emph{geodesic tangent cone} at $p$ and will be denoted by $T^g_pX$.
\end{definition}
The following theorem is due to Nikolaev~\cite[Theorem 3.19]{BH99}:
\begin{theorem}\label{geod-tangent-cone}
$T_p^gX$ is $CAT(0)$ and $\Sigma_p^gX$ is  $CAT(1)$. 
\end{theorem}
Note that this theorem in particular implies that $T_p^gX$ is a geodesic metric space which is not obvious from the definition.
More precisely, it means  that each path component of $\Sigma_p^gX$ is $CAT(1)$ (and hence geodesic) and the distance between points in different components is $\pi$. Note however, that $\Sigma_p^gX$ itself need not be path connected. 

\subsection{$BV$-functions and $DC$-calculus}\label{subsection:BV}
Recall that a function $g: V\subset \mathbb R^n\rightarrow \mathbb R$ of bounded variation ($BV$) admits a derivative in the distributional sense \cite[Theorem 5.1]{Gar-Evans} that is a signed vector valued Radon measure 
$[Dg]=(\frac{\partial g}{\partial x_1}, \dots, \frac{\partial g}{\partial x_n})=[Dg]_{ac}+[Dg]_s$. Moreover, if $g$ is $BV$, then it is $L^1$--differentiable \cite[Theorem 6.1]{Gar-Evans} a.e. with $L^1$-derivative $[Dg]_{ac}$, and approximately differentiable a.e. \cite[Theorem 6.4]{Gar-Evans} with approximate derivative $D^{ap}g=(\frac{\partial^{ap} g}{\partial x_1}, \dots, \frac{\partial^{ap} g}{\partial x_n})$ that coincides almost everywhere with $[Dg]_{ac}$. The set of $BV$-functions $BV(V)$ on $V$ is closed under addition and multiplication \cite[Section 4]{Per-DC}. 
We'll call $BV$ functions $BV_0$ if they are continuous. 
{
\begin{remark}
In ~\cite{Per-DC} and ~\cite{ambrosiobertrand}   $BV$ functions are called  $BV_0$ if they are continuous away from an $\mathcal H_{n-1}$-negligible set.  However, for the purposes of the present paper it will be more convenient to work with the more restrictive definition above.
\end{remark}
}

Then for  $f,g\in BV_0(V)$ we have 
\begin{align}\label{equ:leibniz}
\frac{\partial f g}{\partial x_i}= \frac{\partial f}{\partial x_i} g + f \frac{\partial g}{\partial x_j} 
\end{align}
as signed Radon measures \cite[Section 4, Lemma]{Per-DC}. By taking the $\mathcal{L}^n$-absolutely continuous
part of this equality it follows that  \eqref{equ:leibniz} also holds a.e. in the sense of approximate derivatives. In fact, it  holds at \emph{all} points of approximate differentiability of $f$ and $g$. 
This easily follows by a minor variation of the standard  proof  that $d(fg)=fdg+gdf$ for differentiable functions.
\medskip

A function $f: V\subset \mathbb{R}^n\rightarrow \mathbb{R}$ is called a $DC$--function if in a small neighborhood of each point $x\in V$ one can write $f$ as a difference of two semi-convex functions. The set of $DC$--functions on $V$ is denoted by $DC(V)$ and contains the class $C^{1,1}(V)$. $DC(V)$ is closed under addition and multiplication.
The first partial derivatives $\frac{\partial f}{\partial x_i}$ of a $DC$--function $f: V\rightarrow \mathbb R$ are $BV$, and hence the second partial derivatives $\frac{\partial}{\partial x_j}\frac{\partial f}{\partial x_i}$ exist as signed Radon measure that satisfy $$\frac{\partial}{\partial x_i}\frac{\partial f}{\partial x_j}=\frac{\partial}{\partial x_j}\frac{\partial f}{\partial x_i}$$
\cite[Theorem 6.8]{Gar-Evans}, and hence 
\begin{equation}\label{2n-der-commute}
\frac{\partial^{ap}}{\partial x_i}\frac{\partial f}{\partial x_j}=\frac{\partial^{ap}}{\partial x_j}\frac{\partial f}{\partial x_i}\quad \text{  a.e. on $V$.}
\end{equation}
A map $F: V\rightarrow \mathbb{R}^l$, $l\in\mathbb{N}$, is called a $DC$--map if each coordinate function $F_i$ is $DC$. The composition of two $DC$--maps is again $DC$.
A function $f$ on $V$ is   called $DC_0$ if it's $DC$ and $C^1$.
\medskip

Let $(X,d)$ be a geodesic metric space. A function $f:X\rightarrow \mathbb R$ is called a $DC$-function if it can be locally represented as the difference of two Lipschitz  semi-convex functions. A map $F:Z\rightarrow Y$ between metric spaces $Z$ and $Y$ that is locally Lipschitz is called a $DC$-map if for each 
$DC$-function $f$ that is defined on an open set $U\subset Y$ the composition $f\circ F$ is $DC$ on $F^{-1}(U)$. In particular, a map $F: Z\rightarrow \mathbb R^l$ is $DC$
if and only if its coordinates are $DC$. If $F$ is a bi-Lipschitz homeomorphism and its
inverse is $DC$, we say $F$ is a $DC$-isomorphism.
{
\subsection{$DC$-coordinates in $CAT$-spaces}\label{ln}
The following was developed in \cite{Lytchak-Nagano18} based on previous work by Perelman \cite{Per-DC}.

Assume $(X,d)$ is a $CAT$-space, let $p\in X$ such that there exists an open neighborhood $\hat{U}$ of $p$ that is homeomorphic to $\mathbb{R}^n$. It is well known (see e.g. ~\cite[Lemma 3.1]{Kap-Ket-18} ) that this implies that  geodesics in $\hat{U}$ are locally extendible.

Suppose $T_p^gX\cong \R^n$.

Then,  there exist $DC$ coordinates near $p$ with respect to which the distance on $\hat U$ is induced by a  $BV$ Riemannian metric $g$.

More precisely, let $a_1,\ldots, a_n, b_1,\ldots, b_n$ be points near $p$ such that $d(p,a_i)=d(p,b_i)=r$, $p$ is the midpoint of $[a_i,b_i]$ and $\angle a_ipa_j=\pi/2$ for all $i\ne j$ and all comparison angles $\tilde \angle a_ipa_j, \tilde\angle a_ipb_j,  \tilde\angle b_ipb_j$ are sufficiently close to $\pi/2$ for all $i\ne j$.

Let $x\co \hat U\to \R^n$ be given by $x=(x_1,\dots,x_n)=(d(\cdot,a_1),\ldots, d(\cdot, a_n))$.

Then by  ~\cite[Corollary 11.12]{Lytchak-Nagano18}  for any sufficiently small $0<\eps<\pi_k/4$ the restriction $x|_{B_{2\eps(}p)}$ is Bilipschitz onto an open subset  of $\R^n$. Let $U=B_\eps(p)$ and $V=x(U)$.
By  ~\cite[Proposition 14.4]{Lytchak-Nagano18} $x\co U\to V$ is a DC-equivalence in the sense that $h\co U\to \R$ is DC iff $h\circ x^{-1}$ is DC on $V$.
\medskip

Further, the distance  on $U$  is induced by a $BV$ Riemannian metric $g$ which in $x$ coordinates is given by a $2$-tensor $g^{ij}(p)=\cos \alpha_{ij}$ where 
$\alpha_{ij}$ is the angle at $p$ between geodesics connecting $p$ and $a_i$ and $a_j$ respectively. By the first variation formula $g^{ij}$ is the derivative of $d(a_i, \gamma(t))$ at $0$ where $\gamma$ is the geodesic with $\gamma(0)=p$ and $\gamma(1)=a_j$. Since $d(a_i,\cdot)$, $i=1,\dots n$, are Lipschitz, $g^{ij}$ is in $L^{\infty}$. We denote $\langle v,w\rangle_g(p)= g^{ij}(p)v_iw_j$ the inner product of 
$v,w \in \mathbb R^n$ at $p$. $g^{ij}$ induces a distance function $d_g$ on $V$ such that $x$ is a metric space isomorphism for $\epsilon>0$ sufficiently small.

If $u$ is a Lipschitz function on $U$, $u\circ x^{-1}$ is a Lipschitz function on $V$, and therefore differentiable $\mathcal{L}^n$-a.e. in $V$ by Rademacher's theorem. 
Hence,  we can define the gradient of $u$ at points of differentiability of $u$ in the usual way as the metric dual of its differential. Then the usual Riemannian formulas hold and 
$\nabla u=g^{ij}\frac{\partial u}{\partial x_i}\frac{\partial}{\partial x_j}$ and $|\nabla u|^2_g=g^{ij}\frac{\partial u}{\partial x_i}\frac{\partial u}{\partial x_j}$ {a.e.\ .
}

}
 \section{Structure  theory of RCD+CAT spaces}\label{CD+CAT-str}

In this section we study  metric measure spaces $(X,d,m)$ satisfying
\begin{equation}
\begin{gathered}\label{eq:cd+cat}
\mbox{$(X,d,m)$ is $\CAT(\uk)$ and satisfies the conditions $RCD(K,N)$ for $1\le N<\infty$, $K,\uk<\infty$.}
\end{gathered}
\end{equation}

The following result was proved in ~\cite{Kap-Ket-18}

\begin{theorem}[\cite{Kap-Ket-18}]\label{CD+CAT implies RCD}
Let $(X,d,\m)$ satisfy $CD(K,N)$ for $1\le N<\infty$, $K,\uk\in \mathbb{R}$. Then $X$ is infinitesimally Hilbertian.
In particular, $(X,d,\m)$ satisfies $RCD(K,N)$.
\end{theorem}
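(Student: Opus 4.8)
The plan is to reduce infinitesimal Hilbertianity to a pointwise statement and then read it off from the local $DC$-chart structure of \S\ref{ln}. Recall that $(X,d,\m)$ is infinitesimally Hilbertian precisely when the Cheeger energy $\ChX$ is a quadratic form, equivalently when the minimal relaxed gradient satisfies the pointwise parallelogram law
\[
|\nabla(u+v)|^2 + |\nabla(u-v)|^2 = 2|\nabla u|^2 + 2|\nabla v|^2 \qquad \m\text{-a.e.}
\]
for all $u,v\in W^{1,2}(X)$. By density of Lipschitz functions in $W^{1,2}(X)$ and locality of the minimal relaxed gradient, it suffices to verify this identity $\m$-a.e.\ on a countable family of coordinate neighborhoods covering $\m$-almost all of $X$, and for Lipschitz $u,v$.

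First I would produce the charts. Since $(X,d,\m)$ satisfies $CD(K,N)$ with $N<\infty$, it is rectifiable: by \cite{brusem} there is a fixed $n\in\mathbb{N}$ such that $\m$-a.e.\ point has a unique measured tangent isometric to $(\mathbb{R}^n,|\cdot|,c\mathcal{L}^n)$. Because $X$ is $\CAT(\uk)$, the geodesic tangent cone $T_p^gX=C(\Sigma_p^gX)$ is $CAT(0)$ and agrees with the pointed Gromov--Hausdorff tangent (rescalings of a $CAT$ space converge to $C(\Sigma_p^gX)$); hence at $\m$-a.e.\ $p$ one has $T_p^gX\cong\mathbb{R}^n$. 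At every such $p$ the construction of \S\ref{ln} (Lytchak--Nagano, \cite{Lytchak-Nagano18}) provides a neighborhood $U=B_\eps(p)$ and a $DC$-chart $x\co U\to V\subset\mathbb{R}^n$ in which $d$ is induced by a $BV$ Riemannian metric $g^{ij}\in L^\infty$ that is symmetric and positive definite $\mathcal{L}^n$-a.e. Countably many such $U$ cover a set of full $\m$-measure.

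Next I would identify the abstract gradient with the chart gradient. On each $U$, for a Lipschitz function $u$ the first variation formula gives that the local slope equals $|\nabla u|_g=(g^{ij}\,\partial_i u\,\partial_j u)^{1/2}$ $\mathcal{L}^n$-a.e., and I would upgrade this to the statement that the minimal relaxed gradient $|\nabla u|$ agrees with $|\nabla u|_g$ $\m$-a.e.\ on $U$; this uses that $\ChX$ localizes and that, by the $CD(K,N)$ structure theory, $\m\restriction U$ is mutually absolutely continuous with $\mathcal{L}^n$ (equivalently with $\mathcal{H}^n$), so that $\mathcal{L}^n$-a.e.\ and $\m$-a.e.\ statements in the chart coincide. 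Granting this identification, the parallelogram law is immediate: for fixed $x$ the map $v\mapsto g^{ij}(x)v_iv_j$ is a genuine quadratic form, so
\[
|\nabla(u+v)|_g^2 + |\nabla(u-v)|_g^2 = 2|\nabla u|_g^2 + 2|\nabla v|_g^2
\]
holds $\mathcal{L}^n$-a.e.\ in $V$, hence $\m$-a.e.\ on $U$. Summing over the covering charts yields the parallelogram law on all of $X$, so $\ChX$ is quadratic and $X$ is infinitesimally Hilbertian; combined with $CD(K,N)$ this is the definition of $RCD(K,N)$.

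The main obstacle is the identification step: matching Gigli's abstract minimal relaxed gradient (defined by $L^2$-relaxation of slopes of Lipschitz functions on all of $X$) with the concrete $DC$-chart norm $|\nabla u|_g$. The difficulties are (i) showing $\ChX$ is local enough that its minimal relaxed gradient on $U$ is computed intrinsically in the chart, independently of the behaviour of $u$ outside $U$; (ii) controlling the possibly singular reference measure $\m$ against $\mathcal{L}^n$ in the chart, which is where rectifiability of $CD(K,N)$ spaces and positivity of the density enter; and (iii) promoting the $\mathcal{L}^n$-a.e.\ first-variation identity $\mathrm{Lip}(u)=|\nabla u|_g$, valid for individual Lipschitz functions, to an identity for the minimal relaxed gradient, which requires the $BV$-regularity of $g$ together with a relaxation argument. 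Everything else is either classical $CAT$ theory (uniqueness of geodesics and non-branching, which furnish the $DC$-charts) or formal once the charts and the identification are in place.
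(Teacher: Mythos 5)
The theorem you were asked to prove is not actually proved anywhere in this paper: it is quoted from \cite{Kap-Ket-18}, and the entire structure theory of Section~\ref{CD+CAT-str} and Subsection~\ref{ln} ($DC$-charts, Lemma~\ref{lem:lipschitz}, Propositions~\ref{prop:geod-tangent} and \ref{prop:reg-pooints}) is developed \emph{afterwards}, under the standing assumption \eqref{eq:cd+cat}, which already contains the $RCD$ condition. Your argument runs this logic backwards and is therefore circular in two essential places. First, the Bru\'e--Semola constancy-of-dimension theorem that you invoke to get $\m$-a.e.\ Euclidean measured tangents is a theorem about $RCD(K,N)$ spaces (it is stated exactly that way in this paper); for bare $CD(K,N)$ spaces it is false: a smooth non-Riemannian Finsler manifold with a smooth measure satisfies $CD(K,N)$, its measured tangents are non-Euclidean normed spaces, and it is not infinitesimally Hilbertian --- which is precisely why infinitesimal Hilbertianity is a nontrivial conclusion here. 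In your proof the $CAT(\uk)$ hypothesis is used only to identify the blow-up tangent with the geodesic tangent cone, never to rule out Finsler behaviour, so the one hypothesis that must do the real work sits idle while the conclusion is smuggled in through \cite{brusem}. Second, the chart machinery you rely on is equally off-limits: geodesic extendability near regular points and Euclideanity of $T_q^gX$ (Proposition~\ref{prop:reg-pooints}) use non-branching, the splitting theorem and \cite{gmr}, all under $RCD$; and the mutual absolute continuity of $\m$ and $\mathcal L^n$ in a chart, which you need to upgrade $\mathcal L^n$-a.e.\ identities to $\m$-a.e.\ ones, is again $RCD$ structure theory (Mondino--Kell, Gigli--De Philippis). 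Note moreover that the theorem concerns an \emph{arbitrary} locally finite measure $\m$, so no relation between $\m$ and $\mathcal H^n$ may be assumed.

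The proof in \cite{Kap-Ket-18} must avoid, and does avoid, all $RCD$ input (an outline of it even survives in the vestigial subsections on disintegration and on ``differentials, gradients and generalized linearity'' appended after the bibliography of this source). Its scheme is: $CD(K,N)$ gives doubling and a local $1$-$1$ Poincar\'e inequality, so Cheeger's differentiation theory \cite{cheegerlipschitz} applies, $|\nabla u|=\Lip u$ $\m$-a.e., and blow-up differentials $du_p$ are generalized linear at a.e.\ $p$; the $CD$ disintegration along geodesics (as in Theorem~\ref{th:laplacecomparison}) shows a.e.\ point is interior to a geodesic; at such points the $CAT(\uk)$ geometry --- well-defined angles together with a dense set of directions admitting opposites --- forces the tangent cone to be a Euclidean cone carrying the angle inner product $(v,w)=|v||w|\cos\angle(v,w)$; finally, the polarization of the Cheeger norms is shown to coincide a.e.\ with this bilinear inner product evaluated on Alexandrov gradients, which is exactly the parallelogram law. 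The decisive step --- Hilbertianity of the infinitesimal structure --- is thus extracted from $CAT$ geometry plus a.e.\ geodesic extendability, not cited from $RCD$ theory. That is the step your proposal is missing; the surrounding reductions (locality, density of Lipschitz functions, quadraticity once a measurable Riemannian metric is in hand) are fine.
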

\begin{remark}
It was shown in \cite{Kap-Ket-18} that the above theorem also holds if the $CD(K,N)$ assumption in \eqref{eq:cd+cat} is replaced by $CD^*(K,N)$ or $CD^e(K,N)$ conditions (see \cite{Kap-Ket-18} for the definitions).
Moreover, in a recent paper~\cite{DGPS18} Di Marino, Gigli,  Pasqualetto and Soultanis show that a $CAT(\kappa)$ space with \emph{any} Radon measure  is infinitesimally Hilbertian. For these reasons \eqref{eq:cd+cat} is equivalent to assuming that $X$ is $\CAT(\uk)$ and satisfies one of the assumptions  $CD(K,N), CD^*(K,N)$ or $CD^e(K,N)$ with  $1\le N<\infty$, $K,\uk<\infty$.

\end{remark}
In  \cite{Kap-Ket-18}  we also established the following property of spaces satisfying  \eqref{eq:cd+cat}:
\begin{proposition}[\cite{Kap-Ket-18}]\label{prop:nonbra}
Let $X$ satisfy \eqref{eq:cd+cat}. Then $X$ is non-branching.
\end{proposition}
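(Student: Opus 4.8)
The plan is to argue by contradiction, combining the fact that an $RCD(K,N)$ space with $N<\infty$ is essentially non-branching (Rajala--Sturm) with the rigidity of the space of geodesic directions supplied by the $\CAT(\uk)$ condition. Suppose $X$ branches: there are geodesics $\gamma_1\neq\gamma_2$ agreeing on an initial segment. After reparametrising one obtains a point $p$, a shared incoming segment $[x_0,p]$, and two outgoing geodesics $[p,x_1]$, $[p,x_2]$ with $x_1\neq x_2$, each concatenating with $[x_0,p]$ to form a geodesic. Since a $\CAT(\uk)$ space is locally uniquely geodesic and two geodesics issuing from $p$ with the same germ coincide, branching at $p$ is equivalent to a purely directional statement: the backward direction $\xi_0\in\Sigma_p^g X$ of $[x_0,p]$ admits two distinct antipodes $\xi_1,\xi_2$ (with angle $\pi$ to $\xi_0$), namely the forward directions of $[p,x_i]$.

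First I would pass to the geodesic tangent cone. By Theorem~\ref{geod-tangent-cone}, $T_p^g X=C(\Sigma_p^g X)$ is $\CAT(0)$, and the two antipodal pairs $(\xi_0,\xi_1)$, $(\xi_0,\xi_2)$ yield two distinct geodesic lines through the apex of $T_p^g X$ sharing the ray in direction $\xi_0$ --- a ``tripod'' configuration inside a $\CAT(0)$ cone. The point to stress is that, unlike the $\CAT$ hypothesis (which tolerates such a configuration, as the tripod cone itself is $\CAT(0)$), it is the lower Ricci bound $CD(K,N)$, through essential non-branching and the measure contraction / disintegration of Theorem~\ref{th:laplacecomparison} (cf.\ Remark~\ref{rem:disintegration}), that forbids genuine branching: the model obstruction, the $1$-dimensional tripod, fails to be $CD(K,N)$ for finite $N$. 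So the task is to transfer this incompatibility to the given branch.

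The decisive difficulty is that a single branching configuration is $m$-negligible, so essential non-branching, being an a.e.\ statement over optimal dynamical plans, does not detect it directly; the heart of the proof is therefore a propagation step. I would use the $\CAT(\uk)$ structure --- continuity and local uniqueness of geodesics together with $\supp m=X$ --- to convert the single branch into a positive-$m$-measure family of branching geodesics, contradicting essential non-branching. Concretely, I would fill the geodesic triangle $\Delta(x_0,x_1,x_2)$, whose sides $[x_0,x_1]$ and $[x_0,x_2]$ coincide along the spine $[x_0,p]$ and then open into a two-dimensional fan past $p$; contracting this fan back onto the spine collapses positive-dimensional mass onto a segment, violating the measure-contraction lower bounds coming from $CD(K,N)$. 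Arranging that the fan carries positive $m$-measure (rather than being absorbed by the ambient dimension) and controlling the contraction Jacobian via Theorem~\ref{th:laplacecomparison} is, I expect, the main obstacle; the natural device is to blow up at $p$ and run the argument in $T_p^g X$, where the branching fan is top-dimensional for the relevant comparison, and then pull the resulting contradiction back to $X$.
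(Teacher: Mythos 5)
Your proposal correctly isolates the crux: the $\CAT(\uk)$ condition by itself tolerates branching (the tripod and the open book are $\CAT(0)$), a single branching configuration is $m$-negligible, and so the entire difficulty is to make one branching geodesic visible to the measure. But the proposal does not supply the mechanism that accomplishes this, and the two devices it offers both fail. Two remarks on the set-up first. Invoking essential non-branching via $RCD$ and \cite{rajalasturm} is admissible inside the present paper, where Theorem~\ref{CD+CAT implies RCD} is quoted before Proposition~\ref{prop:nonbra} (alternatively one may use \cite{DGPS18}); be aware, however, that in \cite{Kap-Ket-18}, where this proposition is actually proved, non-branching is established \emph{first} and feeds into the proof of the $RCD$ property, so the cited proof neither can nor does route through essential non-branching. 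Also, your ``purely directional'' reformulation is only one implication, not an equivalence: the claim that two geodesics issuing from $p$ with the same germ coincide is itself an instance of non-branching and is false in a general $\CAT(\uk)$ space, so a branching pair need not produce two \emph{distinct} antipodes $\xi_1\neq \xi_2$ of $\xi_0$; tangential branching, in which the two branches make angle $0$ at $p$, cannot be excluded a priori.

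The genuine gap is the propagation step. (i) The fan filling $\Delta(x_0,x_1,x_2)$ is a Lipschitz image of a two-parameter set, hence $m$-null whenever the essential dimension exceeds two, and contracting a null set contradicts nothing: Brunn--Minkowski/MCP-type lower bounds, the disintegration of Theorem~\ref{th:laplacecomparison}, and essential non-branching are all a.e./positive-measure statements, blind to any family of geodesics passing through the single point $p$. Worse, the geometric picture is incorrect even when the fan is not null: for an interior point $q\in (x_1,x_2)$ the geodesic $[x_0,q]$ need not pass through $p$ at all (in the open book it crosses the spine at a different point), so the fan does \emph{not} collapse onto the spine under contraction toward $x_0$ --- only the two extremal geodesics do. Likewise, thickening the endpoints destroys branching rather than propagating it: for $z$ near $[x_0,p]$ and $w_i$ near $x_i$, the geodesics $[z,w_1]$ and $[z,w_2]$ merely pass near $p$ and do not branch, so no optimal plan between the thickened sets is forced to charge branching geodesics. (ii) The blow-up at $p$ does not repair this. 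Subsequential measured tangent cones at $p$ do exist and are $CD(0,N)$ and $\CAT(0)$ by stability, but the branching need not survive the rescaling: in the tangential case $d(\gamma_1(t),\gamma_2(t))=o(t)$, the two branches converge to the \emph{same} ray, and the cone exhibits no branching whatsoever; and when branching does survive, you are holding in $T_p^gX$ exactly the problem you started with --- there is no contradiction to ``pull back'', and the assertion that the fan becomes ``top-dimensional'' in the cone is unsupported. Any successful argument must engage the $CD$ condition through objects that can detect an individual geodesic --- optimal plans between carefully chosen positive-measure sets adapted to the configuration, whose supports, together with quantitative $\CAT(\uk)$ comparison (first variation, angle rigidity), are sensitive to the branch. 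This is how the argument of \cite{Kap-Ket-18} proceeds, and it is precisely the ingredient missing from your outline.
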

Next we prove

\begin{proposition}\label{prop:geod-tangent}
Let $X$ satisfy \eqref{eq:cd+cat}.  Then for  almost all $p\in X$ it holds that $T_p^gX\cong \R^k$ for some $k\le N$.
\end{proposition}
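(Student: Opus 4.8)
The plan is to combine the Euclidean structure of the measured tangent at regular points, coming from the $RCD$ structure theory, with the metric structure of geodesic tangent cones in $CAT$ spaces. Since $(X,d,\m)$ satisfies $RCD(K,N)$ with $N<\infty$, it is locally doubling and hence proper. By the theorem of Bru\'e--Semola \cite{brusem} there is an integer $n$ with $\m(X\setminus \mathcal R_n)=0$, and at every $p\in\mathcal R_n$ the pointed measured Gromov--Hausdorff tangent cone is (the isomorphism class of) $(\R^n,d_e,c_n\mathcal H_n,0)$ -- this is the definition of an $n$-regular point. In particular the underlying \emph{metric} tangent cone $Y$ of $(X,d)$ at such a $p$ exists, is unique, and is isometric to $\R^n$. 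Since tangent cones of $RCD(K,N)$ spaces are $RCD(0,N)$ and $(\R^n,\mathcal H_n)$ is $RCD(0,N)$ only when $n\le N$, we automatically obtain $n\le N$. It therefore suffices to prove that at each $p\in\mathcal R_n$ the \emph{geodesic} tangent cone $T^g_pX$ is isometric to this metric tangent cone $Y\cong\R^n$.

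First I would produce a canonical isometric embedding $\iota\colon T^g_pX=C(\Sigma^g_pX)\hookrightarrow Y$. A geodesic germ at $p$ in a direction $v\in S^g_pX$ blows up, under the rescalings $(X,\lambda_i d,p)\to Y$, to a ray in $Y$ issuing from the cone tip $o$; because the Alexandrov angle between geodesics in a $CAT(\uk)$ space is scale invariant and is recovered as the angle in the tangent cone, the map sending $(v,t)$ to the point at distance $t$ on this ray preserves the cone distance built from the angle metric on $\Sigma^g_pX$, i.e. it is an isometric embedding. As both $T^g_pX$ (which is $CAT(0)$ by Theorem~\ref{geod-tangent-cone}) and $Y\cong\R^n$ are uniquely geodesic, $\iota$ carries geodesics to geodesics, so its image $C:=\iota(T^g_pX)$ is a closed convex cone in $\R^n$.

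The remaining, and decisive, step is to show $C=\R^n$, equivalently $\iota(\Sigma^g_pX)=\mathbb S^{n-1}$. Here I would argue that every point of $Y$ is realized by a geodesic direction: given $q\in Y$ with $d(o,q)=r>0$, choose $x_i\in X$ with $\lambda_i d(p,x_i)\to r$ and $\widehat{x_i}\to q$; the rescaled geodesics $[p,x_i]$ then converge to the unique geodesic $[o,q]$ in $Y$, and their initial directions $v_i\in\Sigma^g_pX$ have images $\iota(v_i)$ converging in $\mathbb S^{n-1}$ to the direction of $[o,q]$. Since $\iota(\Sigma^g_pX)$ is the image of a complete space, hence closed in $\mathbb S^{n-1}$, the limit direction lies in $\iota(\Sigma^g_pX)$, so $q\in C$. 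Consequently $C=Y\cong\R^n$, which is the assertion.

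The delicate point, which I expect to be the main obstacle, is precisely the convergence of the initial directions $\iota(v_i)$ to the direction of the limit geodesic $[o,q]$. A priori $\Sigma^g_pX$ need not be compact, so the $v_i$ need not converge in $\Sigma^g_pX$ itself; one must instead use the $CAT$ comparison -- upper semicontinuity of angles together with the rigidity and convexity of distances in the $CAT(0)$ limit $Y$ -- to control the angles $\angle(\iota(v_i),[o,q])$ and thereby upgrade the Gromov--Hausdorff convergence of the rescaled geodesics to convergence of their directions. This is exactly the place where both the local compactness (properness) coming from $RCD(K,N)$ and the upper curvature bound are essential.
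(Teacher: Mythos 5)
Your first two steps are sound and in fact parallel the opening of the paper's proof: at $\m$-a.e.\ $p$ the blow-up tangent cone $T_pX$ is Euclidean of dimension $m\le N$ (the paper quotes \cite{gmr} rather than \cite{brusem}, which is immaterial), and the geodesic tangent cone $T_p^gX$ embeds isometrically as a closed convex cone $C\subset T_pX$. The gap is exactly your ``decisive step'', the surjectivity $C=\R^n$. The mechanism you propose cannot produce it: the condition $\widehat{x_i}\to q$ is a statement about the geodesic $[p,x_i]$ at the single scale $\lambda_i$, whereas $\iota(v_i)$ is determined by the behaviour of that same geodesic at the scales $\lambda_j$, $j\to\infty$, which are infinitely finer. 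Pointed GH convergence imposes no compatibility between the approximation maps at different scales: each approximation $\hat X_i\to Y\cong\R^n$ may be post-composed with an arbitrary rotation fixing $o$ (and fixing the closed set $\iota(\Sigma_p^gX)$, so that $\iota$ remains well defined), which changes which points $x_i$ satisfy $\widehat{x_i}\to q$ without changing any of the rays $\iota(v_i)$; so no choice of the $x_i$ by this recipe can force $\iota(v_i)\to q/|q|$. The CAT tools you invoke also point the wrong way: comparison-angle monotonicity gives $\angle(v_i,v_{i'})\le\tilde\angle(\gamma_i(\ell_{i'}),p,x_{i'})$, and passing to the limit in $i'$ this yields $\limsup_{i'}\angle\bigl(\iota(v_i),\iota(v_{i'})\bigr)\le\angle\bigl(\iota(v_i),q/|q|\bigr)$, a bound by precisely the quantity you want to send to $0$, hence circular; and upper semicontinuity of angles gives $\angle\ge\limsup$, which prevents angles from jumping up in a limit but can never force them to converge to $0$. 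What you are attempting to prove is exactly $T_pX\cong T_p^gX$ at a.e.\ point, and the remark immediately following this proposition in the paper states that this is \emph{not known} at this stage; it is obtained only later (Proposition~\ref{prop:reg-pooints}) from geodesic extendibility on manifold neighbourhoods, whose proof already uses the present proposition, so invoking that would be circular.

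The paper's proof is designed to avoid surjectivity altogether, and this is the idea your proposal is missing. By non-branching (\cite{Kap-Ket-18}) and the Cavalletti--Mondino disintegration (Theorem~\ref{th:laplacecomparison}, Remark~\ref{rem:disintegration}), for each fixed basepoint the set of points through which the radial geodesic fails to be extendible is $\m$-negligible; applying this to a countable \emph{dense} set of basepoints $\{p_i\}$ shows that at $\m$-a.e.\ $q$ the set of directions in $T_q^gX$ admitting an opposite direction (angle $\pi$) \emph{inside} $T_q^gX$ is dense. Then one only needs the embedding: a closed convex cone in $\R^m$ containing a dense set of directions whose opposites also lie in the cone must be a linear subspace $\R^k$, $k\le m\le N$ (equivalently, $\Sigma_q^gX$ is a closed convex subset of the unit sphere without boundary, hence a round sphere $\SS^{k-1}$). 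This gives $T_q^gX\cong\R^k$ without ever deciding whether $k=m$, which is all the proposition claims.
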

\begin{remark}
Note that from the fact that $X$ is an $RCD$ space it follows that $T_pX$ is an Euclidean space for almost all $p\in X$ \cite{gmr}. However, at this point in the proof we don't know if $T_pX\cong T_p^gX$ at all such points (we expect this to be true for all $p$).
\end{remark}

\begin{proof}
%
First, recall that by the $CAT$ condition, geodesics of length  less than $\pi_\uk$ in $X$ are unique.
Moreover, since $X$ is nonbranching and $CD$, for any $p\in X$ the set $E_p$ of points $q$, such that the geodesic which connects $p$ and $q$ is not extendible, has measure zero (Remark \ref{rem:disintegration}).

Let $A=\{p_i\}_{i=1}^\infty$ be a countable dense set of points in $X$, and let $C=\bigcup_{i\in \mathbb{N}}E_{p_i}$. For any $q\in X\backslash C$ and any $i$  with $d(p_i,q)<\pi_\uk$ the geodesic $[p_iq]$ can be extended slightly past $q$. Since $A$ is dense this implies that for any $q\in X\backslash C$ there is a dense subset in $T_q^gX$ consisting of directions $v$ which have "opposites" (i.e. making angle $\pi$ with $v$).

For every $p\in X$ and every tangent cone $T_pX$ the geodesic tangent cone $T_p^gX$ is naturally a {closed} convex subset of $T_pX$. Since $X$ is $RCD$ this means that for almost all $p$ the geodesic tangent cone $T_p^gX$ is a convex subset of a Euclidean space. Thus, for almost all $p\in X$ it holds that $T_p^gX$ is a convex subset in $\R^m$ for some $m\leq N$,  is a metric cone over $\Sigma_p^gX$ and contains a dense subset of points with opposites also in  $T_p^gX$. 
In particular, $\Sigma_p^gX$ is a convex subset of $\SS^m$. Since a {closed} convex subset of $\SS^m$ is either $\SS^k$ with $k\le m$ or has boundary this means that for any such $p$ $T_p^gX$ is isometric to a Euclidean space  of dimension $k\le m$.
\end{proof}

\begin{proposition}\label{prop:reg-pooints}
Let $X$ satisfy \eqref{eq:cd+cat}. 
\begin{enumerate}[i)]
\item Let $p\in X$ satisfy $T_p^gX\cong \R^m$ for some $m\le N$.

Then an open neighbourhood $W$ of $p$ is homeomorphic to $\R^m$. 
\item If an open neighborhood $W$ of $p$ is homeomorphic to $\R^m$ then for any $q\in W$ it holds that $T_q^gX\cong T_qX\cong  \R^m$.

Moreover, for any compact set $C\subset W$ there is $\eps=\eps(C)>0$ such that every geodesic starting in $C$ can be extended to length at least $\eps$.

\end{enumerate}
\end{proposition}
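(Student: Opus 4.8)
The plan is to prove the two parts in turn, using the Lytchak--Nagano topological regularity theory for $CAT$ spaces and, crucially, the interaction of the lower bound from $CD(K,N)$ with the upper bound from $CAT(\uk)$. For (i), I would first note that $T_p^gX\cong\R^m$ is equivalent to $\Sigma_p^gX$ being isometric to the round sphere $\SS^{m-1}$. Then every geodesic direction $v\in\Sigma_p^gX$ has a unique antipode at distance $\pi$, and since $X$ is $CAT(\uk)$ two geodesics issuing from $p$ whose directions make angle $\pi$ concatenate to a single local geodesic; hence geodesics through $p$ are locally extendable. Choosing points $a_1,\dots,a_m,b_1,\dots,b_m$ near $p$ realizing an orthonormal frame of mutually opposite directions (possible because $\Sigma_p^gX=\SS^{m-1}$), the distance coordinate map $x=(d(\cdot,a_1),\dots,d(\cdot,a_m))$ is, by the first variation formula together with $CAT$ comparison, locally bi-Lipschitz near $p$; cf.\ Subsection \ref{ln}. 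The only non-formal point is the openness of $x$, i.e.\ that its image contains a neighborhood of $x(p)$, and this is exactly the content of the distance-chart result \cite[Corollary 11.12]{Lytchak-Nagano18}, whose hypotheses are met once geodesics extend near $p$. This yields a bi-Lipschitz homeomorphism from a ball $W=B_\eps(p)$ onto an open subset of $\R^m$, so after shrinking $W\cong\R^m$.

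For (ii), the assumption that $W$ is homeomorphic to $\R^m$ makes $W$ a topological $m$-manifold, so for every $q\in W$ the local homology $H_*(X,X\setminus\{q\})$ is that of $(\R^m,\R^m\setminus\{0\})$. By the structure theory of $CAT$ spaces, small balls around $q$ are homeomorphic to their tangent cones and the local homology is carried by the space of directions; hence $\Sigma_q^gX$ is a homology $(m-1)$-sphere of geometric dimension $m-1$ and $T_q^gX=C(\Sigma_q^gX)$ is a $CAT(0)$ metric cone homeomorphic to $\R^m$, for \emph{every} $q\in W$. The decisive step is to upgrade this topological information to the metric statement $T_q^gX\cong\R^m$. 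For this I would use that every measured Gromov--Hausdorff tangent of the $RCD(K,N)$ space $X$ is $RCD(0,N)$, together with the identification $T_q^gX\cong T_qX$ (valid once geodesics extend at $q$), to conclude that $T_q^gX$ is simultaneously $RCD(0,N)$ and $CAT(0)$. Writing it as the cone $C(\Sigma_q^gX)$ and invoking the cone characterization of the curvature-dimension condition (a cone is $RCD(0,N)$ iff its link is $RCD(N-2,N-1)$ with diameter at most $\pi$), the link $\Sigma_q^gX$ is at once $RCD(N-2,N-1)$ and $CAT(1)$ and a topological $(m-1)$-sphere; the two-sided control then forces $\Sigma_q^gX$ to be the round $\SS^{m-1}$, whence $T_q^gX\cong T_qX\cong\R^m$.

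For the ``moreover'', once $T_q^gX\cong\R^m$ for every $q\in W$, each direction at each point of $W$ has an antipode, so geodesics are locally extendable at every $q\in W$; since $X$ is locally compact (being $CD(K,N)$ with $N<\infty$), an Arzel\`a--Ascoli argument upgrades pointwise to uniform extendability, for if no uniform $\eps$ existed on a compact $C\subset W$ one could extract geodesics starting at $c_i\to c\in C$ whose maximal extensions shrink to $0$ and pass to a limit geodesic non-extendable at $c$, a contradiction. The main obstacle is precisely the metric upgrade in (ii): turning ``$W$ is a topological manifold'' into ``every tangent cone is isometrically Euclidean''. Purely topological $CAT$ regularity does not suffice, since a two-dimensional cone of total angle $\ne 2\pi$ is a topological manifold with non-Euclidean tangent cone; it is exactly the simultaneous validity of $CD(K,N)$ and $CAT(\uk)$---the upper bound excluding cone angles below $2\pi$ (concentrated positive curvature) and the lower bound excluding angles above $2\pi$ (concentrated negative curvature)---that pins the link down to the round sphere.
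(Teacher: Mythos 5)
Your argument has a genuine gap at its very first step, and it recurs throughout: you deduce extendability of geodesics at $p$ from the fact that every direction in $\Sigma_p^gX\cong\SS^{m-1}$ has an antipode. But $\Sigma_p^gX$ is by definition the \emph{metric completion} of the set of geodesic directions, so the antipode $-v$ of a geodesic direction $v$ need not itself be realized by a geodesic issuing from $p$; it may only be a limit of directions of geodesics whose lengths shrink to $0$, in which case no concatenation is available. Ruling out this scenario is precisely the non-trivial content of the results the paper invokes: Kramer's theorem shows that small punctured balls at $p$ are homotopy equivalent to $\SS^{m-1}$, hence non-contractible, and the Lytchak--Schroeder theorem converts non-contractibility into extendability of all geodesics ending at $p$ together with surjectivity of the logarithm map $\Phi\co \bar B_\eps(p)\to\bar B_\eps(0)\subset T_p^gX$; injectivity of $\Phi$ then follows from non-branching (a consequence of $CD$ plus $CAT$), and part i) follows. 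Your alternative route through the distance charts of \cite[Corollary 11.12]{Lytchak-Nagano18} is also problematic: that machinery (and Subsection \ref{ln} of the paper) presupposes local geodesic completeness near $p$, i.e.\ extendability at \emph{all} points of a neighborhood --- indeed a neighborhood homeomorphic to $\R^n$, which is the very conclusion of part i) --- and this you have not established even granting extendability at $p$ itself.

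In part ii) the decisive step is again missing: you reduce everything to the claim that a $CAT(1)$ and $RCD(N-2,N-1)$ topological $(m-1)$-sphere must be isometric to the round $\SS^{m-1}$. This rigidity statement is asserted without proof, is not quotable from the literature cited here, and proving it would require essentially the same work as the proposition itself (an induction on dimension or a splitting argument). The paper avoids it entirely: once geodesics at $q\in W$ extend (again via Kramer and Lytchak--Schroeder, using that $W$ is a manifold so punctured balls at $q$ are non-contractible), one has $T_q^gX\cong T_qX$; this tangent cone is $RCD(0,N)$ and every geodesic direction at its vertex has an opposite, so iterating the splitting theorem yields $T_qX\cong\R^{l}$, and then part i) applied at $q$ plus invariance of domain in $W\cong\R^m$ forces $l=m$. (Your intermediate assertion that small balls in a $CAT$ space are homeomorphic to their tangent cones also fails without geodesic completeness; only Kramer's homotopy equivalence is available a priori.) Finally, your Arzel\`a--Ascoli argument for the ``moreover'' part collapses: if the maximal extensions shrink to $0$, the limit is a constant curve, not a non-extendable geodesic, so no contradiction results. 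The correct argument is that a maximal geodesic starting in $C$ cannot terminate at any point of $W$ (by extendability of incoming geodesics at every point of $W$), hence must survive until it exits a fixed compact neighborhood of $C$ inside $W$, which gives a uniform $\eps$ by compactness of $C$.
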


\begin{proof}
Let us first prove part i).
Suppose $T_p^gX\cong \R^m$. By \cite[Theorem A]{Kramer11} there is a small $R>0$ such that $B_R(p)\backslash \{p\}$ is homotopy equivalent to $\SS^{m-1}$.
Since $\SS^{m-1}$ is not contractible, by~\cite[TRheorem 1.5]{Lyt-Schr07} there is $0<\eps<\pi_\kappa/2$ such that every geodesic starting at $p$ extends to a geodesic of length $\eps$. 
The natural "logarithm" map $\Phi\co \bar B_\eps(p)\to \bar B_\eps(0)\subset T_p^gX$ is Lipschitz since $X$ is $CAT(\kappa)$. By the above mentioned result of Lytchak and Schroeder~\cite[Theorem 1.5] {Lyt-Schr07} $\Phi$ is \emph{onto}.

We also claim that $\Phi$ is 1-1. If $\Phi$ is not 1-1 then 
there exist two distinct unit speed geodesics $\gamma_1,\gamma_2$ of the same length $\eps'\le \eps$ such that $p=\gamma_1(0)=\gamma_2(0)$, $\gamma_1'(0)=\gamma_2'(0)$ but $\gamma_1(\eps')\ne\gamma_2(\eps')$.

Let $v=\gamma_1'(0)=\gamma_2'(0)$. Since $T_p^gX\cong \R^m$ the space of directions  $T_p^gX$ contains the "opposite" vector $-v$. Then there is a geodesic $\gamma_3$ of length $\eps$starting at $p$ in the direction $-v$.
Since  $X$ is $CAT(\kappa)$ {and $2\eps<\pi_k$, the concatenation of $\gamma_3$ with $\gamma_1$ is a geodesic} and the same is true for $\gamma_2$. This  contradicts the fact that $X$ is nonbranching.

Thus, $\Phi$ is a continuous bijection and since both $\bar B_\eps(p)$ and $\bar B_\eps(0)$ are compact and Hausdorff it's a homeomorphism. This proves part i).

Let us now prove part ii). Suppose an open neighborhood $W$ of $p$ is homeomorphic to $\R^m$.  \\By ~\cite[Lemma 3.1]{Kap-Ket-18}  or by the same argument as above using \cite{Kramer11} and ~\cite{Lyt-Schr07}, for any $q\in W$ all geodesics starting at $q$ can be extended to length at least $\eps(q)>0$. Therefore $T_q^gX\cong T_qX$. By the splitting theorem  $T_qX\cong \R^l$ where where $l=l(q)\le N$ might a priori depend on $q$. However,  using part i) we conclude that an open neighbourhood of $q$ is homeomorphic to $\R^{l(q)}$. Since $W$  is homeomorphic to $\R^m$ this can only happen if $l(q)=m$.

The last part of ii) immediately follows from above and compactness of $C$.

\end{proof}
{
\subsection{$DC$-coordinates in $RCD+CAT$-spaces.}

Let $X^g_{reg}$ be the set of points $p$ in $X$ with $T_pX\cong T_p^gX\cong\R^n$. Then by Proposition~\ref{prop:reg-pooints} there is an open  neighbourhood $\hat U$ of $p$ homeomorphic to $\R^n$ such that every $q\in \hat U$ also lies in $X^g_{reg}$. In particular, $X^g_{reg}$ is open. Further, geodesics in $\hat U$ are locally extendible by Proposition~\ref{prop:reg-pooints}. 

Thus the theory of Lytchak--Nagano from ~\cite{Lytchak-Nagano18} applies, and let $x: U\rightarrow V$ with $U=B_{2\epsilon}(p)\subset \hat{U}$ be $DC$-coordinates as in Subsection \ref{ln}.
The pushforward of the Hausdorff measure $\mathcal{H}^n$ on $U$ under $x$ coordinates is given by $\sqrt{|g|} \mathcal{L}$ where $|g|$ is the determinant of $g_{ij}$ 
Consequently, the map $x\co (U,d,\mathcal H_n)\to (V,d_g,\sqrt{|g|}\mathcal L_n)$ is a metric-measure isomorphism.

With a slight abuse of notations we will identify these metric-measure spaces as well as functions on them, i.e we will identify any function $u$ on $U$ with $u\circ x^{-1}$ on $V$.

\begin{lemma}\label{cont-angles}
Angles between geodesics in $U$ are continuous. That is if $q_i\to q\in U, [q_is_i]\to [qs], [q_it_i]\to [qt]$   are converging sequences  with $q\ne s, q\ne t$  then $\angle s_iq_it_i\to \angle sqt$.
\end{lemma}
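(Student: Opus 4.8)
The plan is to establish the two one-sided inequalities $\limsup_i \angle s_iq_it_i\le\angle sqt$ (upper semicontinuity) and $\liminf_i \angle s_iq_it_i\ge\angle sqt$ (lower semicontinuity) separately. Upper semicontinuity is a soft consequence of the $CAT(\uk)$ condition alone and requires no regularity, whereas lower semicontinuity is precisely where the local extendability of geodesics on $X^g_{reg}$ (Proposition~\ref{prop:reg-pooints}) and the non-branching of $X$ (Proposition~\ref{prop:nonbra}) must enter.

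For upper semicontinuity I would fix a small $r>0$, let $\gamma_{qs}(r)$ denote the point at distance $r$ from $q$ on $[qs]$, and similarly for the other segments. Recall that in a $CAT(\uk)$ space the $\uk$-comparison angle $\tilde\angle_\uk(\gamma_{qs}(r), q, \gamma_{qt}(r))$ is monotone non-decreasing in $r$ and decreases to $\angle sqt$ as $r\to 0$; in particular $\angle sqt\le \tilde\angle_\uk(\gamma_{qs}(r),q,\gamma_{qt}(r))$ for every admissible $r$ (see \cite{BBI,BH99}). Since the three pairwise distances of the triple $(\gamma_{q_is_i}(r), q_i, \gamma_{q_it_i}(r))$ converge to those of $(\gamma_{qs}(r), q, \gamma_{qt}(r))$, and the comparison angle is a continuous function of the three side lengths in the nondegenerate range, the monotonicity gives $\angle s_iq_it_i\le \tilde\angle_\uk(\gamma_{q_is_i}(r), q_i, \gamma_{q_it_i}(r))\to \tilde\angle_\uk(\gamma_{qs}(r), q, \gamma_{qt}(r))$. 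Taking $\limsup_i$ and then letting $r\to 0$ yields $\limsup_i \angle s_iq_it_i\le\angle sqt$.

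For lower semicontinuity I would reduce to the previous case by passing to opposite geodesics. Since $q\in X^g_{reg}$ and $X^g_{reg}$ is open, for $i$ large $q_i\in X^g_{reg}$, and by Proposition~\ref{prop:reg-pooints} there is a uniform $\eps>0$ on a compact neighbourhood of $q$ so that each $[q_it_i]$ extends beyond $q_i$; let $[q_it_i']$ be these extensions (so that $t_i'$, $q_i$, $t_i$ are collinear) and let $[qt']$ extend $[qt]$. Two facts are needed. First, because $T^g_{q_i}X\cong T^g_qX\cong\R^n$, the space of geodesic directions is a round sphere $\SS^{n-1}$, so the relation $\angle t_iq_it_i'=\pi$ forces the direction of $[q_it_i']$ to be antipodal to that of $[q_it_i]$, whence $\angle s_iq_it_i+\angle s_iq_it_i'=\pi$ and likewise $\angle sqt+\angle sqt'=\pi$. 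Second, the extensions converge, $[q_it_i']\to[qt']$. Granting both, upper semicontinuity applied to the primed configuration gives $\limsup_i\angle s_iq_it_i'\le\angle sqt'$, and therefore $\liminf_i\angle s_iq_it_i=\pi-\limsup_i\angle s_iq_it_i'\ge\pi-\angle sqt'=\angle sqt$, which combined with the first part proves continuity.

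The main obstacle is the convergence $[q_it_i']\to[qt']$ of the extended geodesics, which must be handled carefully because the basepoint $q_i$ also moves. I would argue by compactness: the segments $[q_it_i']$ have length bounded below by the uniform $\eps$ and lie in a fixed compact set, so by Arzel\`a--Ascoli a subsequence converges to a geodesic $\sigma$ issuing from $q$; since $[q_it_i]\to[qt]$ and uniform limits of geodesics are geodesics in a $CAT(\uk)$ space, the concatenation of $\sigma$ with $[qt]$ is again a geodesic, i.e.\ $\sigma$ extends $[qt]$ beyond $q$. Non-branching of $X$ forces such an extension to be unique, so $\sigma=[qt']$; as every subsequential limit equals $[qt']$, the whole sequence converges. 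By contrast the additivity $\angle sqt+\angle sqt'=\pi$ is the easy half: the inequality $\ge\pi$ is the triangle inequality for angles, while $\le\pi$ is immediate from the Euclidean (spherical) geometry of the tangent cone at the regular point.
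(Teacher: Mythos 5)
Your proof is correct and follows essentially the same route as the paper: upper semicontinuity of angles in $CAT(\uk)$ spaces, combined with extending one of the two geodesics a definite amount past $q_i$ (via Proposition~\ref{prop:reg-pooints}) and using the Euclidean geodesic tangent cones to get supplementary angles, so that upper semicontinuity applied to the opposite configuration yields the missing lower semicontinuity. The only difference is bookkeeping: the paper extends $[s_iq_i]$ and works with an arbitrary subsequential limit of the extensions (never needing to identify it, hence never invoking non-branching inside the lemma), whereas you pin down the limit extension as $[qt']$ via non-branching to obtain convergence of the whole sequence.
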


\begin{proof}
Without loss of generality we can assume that $q_i\in U$ for all $i$. 
Let   $\alpha_i=\angle s_iq_it_i, \alpha=\angle sqt$. Let $\{\alpha_{i_k}\}$ be a converging subsequence and let $\bar \alpha=\lim_{k\to\infty}\alpha_{i_k}$. Then by upper semicontinuity of angles in $CAT(\kappa)$ spaces it holds that $\alpha\ge \bar\alpha$. We claim that $\alpha = \bar\alpha$.

By Proposition~\ref{prop:reg-pooints} we can extend $[s_iq_i]$ past $q_i$ as geodesics a definite amount $\delta$  to geodesics $[s_iz_i]$.  Let $\beta_i=\angle z_iq_it_i$. By possibly passing to a subsequence of $\{i_k\}$ we can assume that 
$[s_{i_k}z_{i_k}]\to[sz]$. Let $\beta=\angle zqt$. Then since all spaces of directions  $T_{q_i}^gX$ and $T_q^gX$ are Euclidean by Proposition~\ref{prop:reg-pooints},  we have that $\alpha_i+\beta_i=\alpha+\beta=\pi$ for all $i$. Again using semicontinuity of angles we get that $\beta\ge \bar\beta$.

We therefore have
\[
\pi=\alpha+\beta\ge\bar\alpha+\bar\beta=\pi
\]

Hence all the inequalities above are equalities and $\alpha=\bar\alpha$. Since this holds for an arbitrary converging subsequence $\{\alpha_{i_k}\}$ it follows that $\lim_{\i\to\infty} \alpha_i=\alpha$.
\end{proof}


Let $\tilde {\mathcal A}$ be the algebra of functions of  
 the form $\phi(f_1,\ldots, f_m)$ where $f_i=d(\cdot, q_i)$ for some $q_1,\ldots, q_m$ with $|q_ip|>\eps$ and $\phi$ is smooth.  Together with the first variation formula for distance functions  Lemma ~\ref{cont-angles} implies that for any $u,h\in \tilde {\mathcal A}$ it holds that $\langle \nabla u, \nabla h\rangle_g$ is continuous on $V$.  In particular,  $g^{ij}=\langle \nabla x_i,\nabla x_j\rangle_g$ is continuous and hence $g$ is $BV_0$ and not just BV.

Furthermore, since $\frac{\partial}{\partial x_i}=\sum_jg_{ij}\nabla x_j$ where $g_{ij}$ is the pointwise inverse of $g^{ij}$, Lemma ~\ref{cont-angles} also implies that  any $u\in \tilde {\mathcal A}$ is $C^1$ on $V$. Hence, any such $u$ is $DC_0$ on $V$.
\\

Recall that for a Lipschitz function $u$ on $V$ we have two a-priori different notions of the norm of the gradient defined $m$-a.e.: the "Riemannian"  norm of the gradient $|\nabla u|^2_g=g^{ij}\frac{\partial u}{\partial x^i}\frac{\partial u}{\partial x_j}$ and 
 the minimal weak upper gradient $|\nabla u|$  when $u$ is viewed as a  Sobolev functions in $ W^{1,2}(\m)$. We observe that these two notions are equivalent.

\begin{lemma}\label{lem:lipschitz}
Let $u, h: U\rightarrow \mathbb{R}$ be  Lipschitz functions. Then $|\nabla u|=|\nabla u|_g$, $|\nabla h|=|\nabla h|_g$ $\m$-a.e. and $\langle \nabla u,  \nabla h\rangle =\langle \nabla u,  \nabla h\rangle_g$ $\m$-a.e..

In particular, $g^{ij}=\langle \nabla x_i,\nabla x_j\rangle_g=\langle \nabla x_i,\nabla x_j\rangle$ {$\m$-a.e..}
\end{lemma}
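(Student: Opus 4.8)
The plan is to pass to $V$ via the identification $u\leftrightarrow u\circ x^{-1}$ and to pin down the minimal weak upper gradient $|\nabla u|$ by squeezing it between the local slope $\Lip(u)$ from above and $|\nabla u|_g$ from below, after first checking that $\Lip(u)=|\nabla u|_g$ $\m$-a.e. The inner product identity and the coordinate statement then follow by polarization and by specializing to $u=x_i$, $h=x_j$.

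First I would record the soft upper bound. For any Lipschitz $u$ the local slope $\Lip(u)$ is an upper gradient, hence a relaxed gradient (take the constant sequence $u_n\equiv u$), so $|\nabla u|\le \Lip(u)$ $\m$-a.e. Next I would identify $\Lip(u)$ with $|\nabla u|_g$ pointwise a.e. Fix $y\in V$ at which $u$ is differentiable in the Euclidean sense (a.e., by Rademacher); recall from the preceding subsection that $g$ is continuous ($BV_0$, not merely $BV$). Continuity of $g$ forces $d_g$ to be infinitesimally the $g(y)$-norm, i.e.\ $d_g(y,z)=|z-y|_{g(y)}(1+o(1))$ as $z\to y$ (straight segments give the upper bound; near-minimizing paths stay in a small ball on which $g\approx g(y)$, giving the lower bound). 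Combined with $u(z)-u(y)=\langle Du(y),z-y\rangle+o(|z-y|)$ this yields
\[
\Lip(u)(y)=\limsup_{z\to y}\frac{|u(z)-u(y)|}{d_g(y,z)}=\sup_{|v|_{g(y)}=1}\langle Du(y),v\rangle=|\nabla u|_g(y),
\]
since $|\nabla u|_g(y)^2=g^{ij}(y)\partial_iu\,\partial_ju$ is exactly the dual $g(y)$-norm of $Du(y)$. Thus $\Lip(u)=|\nabla u|_g$ $\m$-a.e., and together with the upper bound, $|\nabla u|\le|\nabla u|_g$ $\m$-a.e.

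For the reverse inequality I would use that $(V,d_g,\m)$ is a PI space. Since $g^{ij}$ and its pointwise inverse $g_{ij}$ are continuous on the relatively compact $V$, they are bounded with bounded inverse, so $d_g$ is bi-Lipschitz to the Euclidean distance and $\m=\sqrt{|g|}\,\mathcal{L}^n$ is comparable to $\mathcal{L}^n$; being bi-Lipschitz and measure-comparable to a bounded Euclidean domain, $(V,d_g,\m)$ is locally doubling and supports a local Poincar\'e inequality. By Cheeger's theorem the minimal weak upper gradient of a Lipschitz function equals its local slope $\m$-a.e., i.e.\ $|\nabla u|=\Lip(u)$. (Alternatively one argues by blow-up: at a.e.\ $y$ and each $\eps>0$, continuity of $g$ makes $d_g$ be $(1+\eps)$-bi-Lipschitz to the constant metric $d_{g(y)}$ on a small ball, so $|\nabla u|$ is $(1+\eps)$-comparable to the Euclidean minimal weak upper gradient for $g(y)$, which tends to $|\nabla u|_g(y)$ as $\eps\to0$.) With the previous step this gives $|\nabla u|=|\nabla u|_g$ $\m$-a.e., and likewise for $h$.

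The inner product identity then follows by polarization: both $|\nabla\cdot|^2$ (as $X$ is infinitesimally Hilbertian) and $|\nabla\cdot|_g^2$ satisfy the parallelogram law, so applying the norm identity to $u+h$ and $u-h$ gives $\langle\nabla u,\nabla h\rangle=\langle\nabla u,\nabla h\rangle_g$ $\m$-a.e. For the last assertion, each coordinate $x_i=d(\cdot,a_i)$ is $1$-Lipschitz, so the identity applies with $u=x_i$, $h=x_j$; since $\partial_kx_i=\delta_{ki}$ gives $\langle\nabla x_i,\nabla x_j\rangle_g=g^{ij}$, we conclude $g^{ij}=\langle\nabla x_i,\nabla x_j\rangle_g=\langle\nabla x_i,\nabla x_j\rangle$ $\m$-a.e. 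The main obstacle is the reverse inequality $|\nabla u|\ge|\nabla u|_g$: the upper bound and the slope computation are soft, but excluding that approximating Lipschitz sequences strictly lower the gradient requires genuine regularity of the space, which is exactly what the PI structure (Cheeger) or the Euclidean blow-up supplies.
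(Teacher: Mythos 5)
Your proof is correct and follows essentially the same route as the paper: reduce to the norm identity $|\nabla u|=|\nabla u|_g$ by polarization, compute $\Lip u=|\nabla u|_g$ at points of differentiability using continuity of $g^{ij}$ and the fact that $d$ is induced by $g$, and invoke Cheeger's theorem that $\Lip u=|\nabla u|$ $\m$-a.e.\ in doubling spaces with a local Poincar\'e inequality. The only (harmless) deviation is that you justify the doubling/PI hypothesis by bi-Lipschitz and measure comparison of $(V,d_g,\m)$ with a Euclidean domain, whereas the paper takes it directly from the fact that $(U,d,\m)$, being (an open subset of) a $CD(K,N)$ space, is doubling and admits a local $1$-$1$ Poincar\'e inequality.
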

\begin{proof}
First note that since both  $\langle \nabla u, \nabla h\rangle$ and $\langle \nabla u,  \nabla h\rangle_g$ satisfy the parallelogram rule, it's enough to prove that $|\nabla u|=|\nabla u|_g$ a.e..

Recall that $g^{ij}$ is continuous on $U$.
Fix a point $p$ where $u$ is differentiable. Then
\begin{align*}
\Lip u(p)= \limsup_{q\rightarrow p}\frac{|u(p)-u(q)|}{d(p,q)}&= \limsup_{q\rightarrow p}\frac{|u(p)-u(q)|}{|p-q|_{g(p)}}\\
&=\sup_{|v|_{g(p)}=1} D_v u=\sup_{|v|_{g(p)}=1} \langle v, \nabla u\rangle_{g(p)}=|\nabla u|_{g(p)}.
\end{align*}
In the second equality we used that $d$ is induced by $g^{ij}$, and that $g^{ij}$ is continuous.
Since $(U,d,\m)$ admits a local 1-1 Poincar\'e inequality and is doubling, the claim follows from \cite{cheegerlipschitz} where it is proved that for such spaces $\Lip u=|\nabla u|$ a.e..
\end{proof}

In view of the above Lemma from now on we will not distinguish between  $|\nabla u|$ and $ |\nabla u|_g$ and between $\langle \nabla u, \nabla h\rangle$ and $\langle \nabla u,  \nabla h\rangle_g$.

\begin{proposition}\label{prop:BV}
If $u\in W^{1,2}(\m)\cap BV(U)$, then $|\nabla u|^2=g^{ij}\frac{\partial^{ap}u}{\partial x_i}\frac{\partial^{ap}u}{\partial x_j}$ $\m$-a.e.\ .
\end{proposition}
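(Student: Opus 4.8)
The plan is to reduce to the Lipschitz case already settled in Lemma~\ref{lem:lipschitz} via a Lusin-type $C^1$-approximation of $u$ that simultaneously matches the approximate first derivatives. Throughout I use that $\m=\sqrt{|g|}\,\mathcal{L}^n$ with $\sqrt{|g|}$ continuous and bounded away from $0$ and $\infty$ on the relatively compact set $U$, so that $\m$-null and $\mathcal{L}^n$-null sets coincide and ``a.e.''\ is unambiguous.

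First I would record that, since $u\in BV(U)$, the function $u$ is approximately differentiable $\mathcal{L}^n$-a.e., with approximate derivative $\frac{\partial^{ap}u}{\partial x_i}$ coinciding a.e.\ with the density of the absolutely continuous part of $[Du]$ (Theorem 6.4 in \cite{Gar-Evans}). This puts $u$ in the scope of the Whitney--Federer approximation theorem: for every $\varepsilon>0$ there is a function $v\in C^1$ (in particular $v$ is Lipschitz on the bounded set $U$, hence $v\in W^{1,2}(\m)$) with $\mathcal{L}^n(\{u\neq v\})<\varepsilon$ and, on the agreement set $A:=\{u=v\}$, $\frac{\partial v}{\partial x_i}=\frac{\partial^{ap}u}{\partial x_i}$ $\mathcal{L}^n$-a.e.\ The latter holds because at a density-one point of $A$ at which both functions are approximately differentiable, the approximate derivative of $u$ must equal the genuine derivative of $v$.

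Next I would combine two locality statements on $A$. On the one hand, $v$ is Lipschitz on $U$, so Lemma~\ref{lem:lipschitz} gives $|\nabla v|^2=g^{ij}\frac{\partial v}{\partial x_i}\frac{\partial v}{\partial x_j}$ $\m$-a.e.\ On the other hand, the minimal weak upper gradient is local: since $u=v$ $\m$-a.e.\ on $A$, one has $|\nabla u|=|\nabla v|$ $\m$-a.e.\ on $A$ (strong locality of minimal weak upper gradients, \cite{agslipschitz,giglistructure}). Feeding in $\frac{\partial v}{\partial x_i}=\frac{\partial^{ap}u}{\partial x_i}$ on $A$ yields $|\nabla u|^2=g^{ij}\frac{\partial^{ap}u}{\partial x_i}\frac{\partial^{ap}u}{\partial x_j}$ $\m$-a.e.\ on $A$. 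Finally, taking $\varepsilon=\varepsilon_k\to 0$ produces agreement sets $A_k$ with $\m(U\setminus A_k)\to 0$; since the desired identity holds a.e.\ on each $A_k$, it holds a.e.\ on $\bigcup_k A_k$, which exhausts $U$ up to a null set.

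The main obstacle I anticipate is the interface between the two calculi: guaranteeing that the Whitney approximant $v$ reproduces the approximate derivative of $u$ on $A$ (so that the Euclidean quantity $g^{ij}\frac{\partial^{ap}u}{\partial x_i}\frac{\partial^{ap}u}{\partial x_j}$ is indeed the right object), and invoking strong locality of the minimal weak upper gradient on the set where $u$ and $v$ coincide. The first is the elementary but essential fact that approximate and classical derivatives agree at common density-one points; the second is a standard property of the Sobolev calculus, valid here because $(U,d,\m)$ is doubling and supports a Poincar\'e inequality. Both hypotheses on $u$ are used: $u\in BV(U)$ supplies the approximate derivatives and the $C^1$-approximability, while $u\in W^{1,2}(\m)$ makes $|\nabla u|$ meaningful and allows locality to be applied.
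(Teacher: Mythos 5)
Your proof is correct and follows essentially the same route as the paper's: a Lusin-type $C^1$ approximation of the $BV$ function (Evans--Gariepy, Theorem 6.13), matching of approximate derivatives on the agreement set, strong locality of minimal weak upper gradients, and Lemma~\ref{lem:lipschitz} applied to the Lipschitz approximant. The only (immaterial) difference is the final exhaustion step, where you use a simple union of agreement sets with vanishing complementary measure while the paper runs a Borel--Cantelli argument with summable $\eta_k$.
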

\begin{proof}
We choose a set $S\subset U$ of full measure such that $u$ and $|\nabla u|$ are defined pointwise on $S$ and $u$ is approximately differentiable at every $x\in S$.
Since $u$ is $BV(U)$, for $\eta>0$ there exist $\hat u_\eta\in C^1(U)$ such that for the set
$$
B_\eta=\left\{x\in S: u(x)\neq \hat u_\eta(x), D^{ap}u(x)\neq D\hat u(x)\right\}\cap S
$$
one has $\m(B_\eta)\leq \eta$ \cite[Theorem 6.13]{Gar-Evans}. Note, since $f$ is continuous, there exists a constant $\lambda>0$ such that $\lambda^{-1}\m\leq \mathcal H^n\leq \lambda \m$ on $U$. 
Moreover, since $g^{ij}$ is continuous, one can check that $\hat u_\eta$ is Lipschitz w.r.t. $d_g$, and hence $\hat f\in W^{1,2}(\m)$.

By \cite[Proposition 4.8]{agsheat} we know that $|\nabla u||_{A_\eta}=|\nabla \hat u||_{A_\eta}$ $\m$-a.e. for $A_\eta =S\backslash B_\eta$.
On the other hand, uniqueness of approximative derivatives also yields that $g^{ij}\frac{\partial^{ap}u}{\partial x_i}\frac{\partial^{ap}u}{\partial x_j}|_{A_\eta}=g^{ij}\frac{\partial^{ap}\hat u_\eta}{\partial x_i}\frac{\partial^{ap}\hat u_\eta}{\partial x_j}|_{A_\eta}$ $\m$-a.e.\ . Hence, since $\hat u$ is Lipschitz w.r.t. $d$,
\begin{align*}
|\nabla u| 1_{A_\eta}=g^{ij}\frac{\partial^{ap}u}{\partial x_i}\frac{\partial^{ap}u}{\partial x_j} 1_{A_\eta} \ \ \m\mbox{-a.e. }. 
\end{align*}
by Lemma \ref{lem:lipschitz}. 

Now, we pick a sequence $\eta_k$ for $k\in \mathbb{N}$ such that $\sum_{k=1}^{\infty}\eta_k<\infty$. Then, by the Borel-Cantelli Lemma the set $$B=\left\{x\in S: \exists\mbox{ infinitely many }k\in \mathbb{N}\mbox{ s.t. } x\in B_{\eta_k}\right\}$$ is of $\m$-measure $0$.
Consequently, for $x\in A=S\backslash B$ we can pick a $k\in \mathbb N$ such that $x\in A_{\eta_k}\subset S$. It follows
\begin{align*}
|\nabla u|^2(x)=g^{ij}\frac{\partial^{ap}u}{\partial x_i}\frac{\partial^{ap}u}{\partial x_j}(x) \ \forall x\in S 
\end{align*}
and hence $\m$-a.e.\ .
\end{proof}}

\section{Proof of the main theorem}
{\bf 0.} Let $(X,d, f\mathcal H_n)$ be $RCD(\ke,n)$ and $\CAT(\uk)$ where $0\le f\in L^1_{loc}(\mathcal{H}^n)$.

\begin{remark}
If $(X,d,\m)$ is a weakly non-collapsed $RCD$-space in the sense of \cite{GP-noncol} or a space satisfying  the generalized Bishop inequality in the sense of \cite{kbg} and if $(X,d)$ is $\CAT(\uk)$, the assumptions are satisfied by \cite[Theorem 1.10]{GP-noncol}.
\end{remark}
Following Gigli and  De Philippis~\cite{GP-noncol} for any $x\in X$ we consider the monotone quantity $\frac{m(B_r(x))}{v_{k,n}(r)}$ which is non increasing in $r$ by the Bishop-Gromov volume comparison. 
Let $\theta_{n,r}(x)=\frac{m(B_r(x))}{\omega_nr^n}$. Consider the density function $\theta_{n}(x)=\lim_{r\to 0}\theta_{n,r}(x)=\lim_{r\to 0}\frac{m(B_r(x))}{\omega_nr^n}$.

 Since $n$ is fixed throughout the proof we will drop the  subscripts $n$  and from now on use the notations $\theta(x)$ and $\theta_{r}(x)$ for $\theta_{n}(x)$ and $\theta_{n,r}(x)$ respectively.

By Propositions~\ref{prop:geod-tangent}, \ref{prop:reg-pooints} and \cite[Theorem 1.10]{GP-noncol} we have that for almost all $p\in X$ it holds that $T_pX\cong T_p^gX\cong\R^n$ and $\theta(x)=f(x)$.

Therefore we can and will assume from now on that $f=\theta$ everywhere.
 
\begin{remark}
Monotonicity of  $r\mapsto \frac{m(B_r(x))}{v_{k,n}(r)}$ immediately implies that $f(x)=\theta(x)>0$ for all $x$.
 \end{remark}
 
 Let $x\in X_{reg}^g$. Then $T_p^gX\cong \R^m$ for some $m\le n$. We claim that $m=n$. By  Proposition~\ref{prop:reg-pooints}  $X_{reg}^g$ is an $m$-manifold near $p$ and by section~\ref{ln} DC coordinates near $p$ give a \emph{biLipschitz} homeomorphism of an open neighborhood of $p$ onto an open set in $\R^m$. Since $m=f\mathcal H_n$ this can only happen if $m=n$.

\begin{lemma}
\label{dens-semiconcave}\cite[Lemma 5.4]{Kap-Ket-18} $\theta=f$ is  semiconcave on $X$.
\end{lemma}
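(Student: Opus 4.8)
The plan is to extract, from the interplay of the lower bound $CD(\ke,n)$ and the upper bound $\CAT(\uk)$, a one–dimensional $\lambda$–concavity inequality for $\theta^{1/n}$ along every geodesic, and to read off semiconcavity of $\theta$ from it. Since semiconcavity is local and is tested along geodesics, it suffices to produce, for each (unique, by $\CAT(\uk)$) minimizing geodesic $\gamma$ from $x_0$ to $x_1$ of length $L=d(x_0,x_1)<\pi_\uk$ and each $t\in(0,1)$, a lower bound for $\theta(\gamma(t))$ in terms of $\theta(x_0)$ and $\theta(x_1)$ with a controlled quadratic error.

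First I would localize the defining inequality of $CD(\ke,n)$ to small balls. Fix $\eps>0$ and use the normalized uniform measures $\mu_i=\m(B_\eps(x_i))^{-1}\m|_{B_\eps(x_i)}$, whose densities are the constants $\rho_i\equiv\m(B_\eps(x_i))^{-1}$, so that $\rho_i^{-1/n}=\m(B_\eps(x_i))^{1/n}$. Feeding these into the $CD(\ke,n)$ inequality, estimating $d(x,y)=L+O(\eps)$ on $\supp\pi$, and combining with the Jensen bound $S_n(\mu_t\mid\m)\ge-\m(\supp\mu_t)^{1/n}$ from the preliminaries, one obtains after cancelling the total mass
\[
\m(\supp\mu_t)^{1/n}\ \ge\ \tau^{(1-t)}_{\ke,n}(L)\,\m(B_\eps(x_0))^{1/n}+\tau^{(t)}_{\ke,n}(L)\,\m(B_\eps(x_1))^{1/n}-o(\eps).
\]
Here $\CAT(\uk)$ enters decisively. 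Because geodesics are unique and depend continuously on their endpoints, $\supp\mu_t$ lies in the set of $t$–points of geodesics joining the two balls, and the $\CAT(\uk)$ comparison bounds the spreading of these $t$–points, placing that set inside $B_{\rho}(x_t)$ with $\rho=c(t)\,\eps+o(\eps)$ and $c(t)=\big(\sin_{\uk}((1-t)L)+\sin_{\uk}(tL)\big)/\sin_{\uk}(L)$. Substituting $\m(\supp\mu_t)\le\m(B_\rho(x_t))$, dividing by $(\omega_n\eps^n)^{1/n}$ and letting $\eps\to0$ turns each volume ratio into the corresponding density, yielding the pointwise inequality
\[
c(t)\,\theta(\gamma(t))^{1/n}\ \ge\ \tau^{(1-t)}_{\ke,n}(L)\,\theta(x_0)^{1/n}+\tau^{(t)}_{\ke,n}(L)\,\theta(x_1)^{1/n}
\]
at every triple on a geodesic.

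To conclude I would expand for small $L$. With $\tau^{(s)}_{\ke,n}(L)=s+\tfrac{\ke}{6n}s(1-s^2)L^2+O(L^4)$ and $c(t)=1+\tfrac{\uk}{2}t(1-t)L^2+O(L^4)$, the last display is exactly the assertion that $u:=\theta^{1/n}\circ\gamma$ satisfies, in the barrier sense along $\gamma$, the inequality $u''+\big(\tfrac{\ke}{n}-\uk\big)u\le0$ (as one checks at $t=\tfrac12$, say); equivalently, $\theta^{1/n}$ is $\lambda$–concave along geodesics with $\lambda=\tfrac{\ke}{n}-\uk$. The same pointwise inequality first forces $\theta$ to be finite everywhere: if $\theta(x_0)=\infty$ then the inequality propagates $\theta=\infty$ to every interior point of every short geodesic issuing from $x_0$, hence to an open set, contradicting $\theta=f<\infty$ $\mathcal H_n$–a.e. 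Applying the inequality to pairs of nearby points symmetric about a given point then bounds $\theta$ locally from above, so $\theta^{1/n}$ is genuinely semiconcave and locally Lipschitz. Finally, composing with the smooth increasing map $\phi(s)=s^n$ preserves the one–sided second-order bound, since $(\phi\circ u)''=\phi''(u)(u')^2+\phi'(u)u''$ is bounded above once $\phi'\ge0$, $u$ is Lipschitz, and $u''$ is bounded above; hence $\theta=(\theta^{1/n})^n$ is semiconcave.

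The step I expect to be the main obstacle is the control of $\supp\mu_t$. One must verify that the displacement interpolant of the two uniform ball measures does not spread out faster than the $\CAT(\uk)$ comparison model, so that its support is trapped in $B_{c(t)\eps+o(\eps)}(x_t)$. This is precisely where both curvature bounds are used in tandem: $CD(\ke,n)$ forces the Brunn–Minkowski lower bound on the interpolant's mass through $S_n$, while $\CAT(\uk)$ caps the geometric spreading from above, and a bare $CD$ space—whose density need not even be continuous—would offer no such control; non-branching (Proposition~\ref{prop:nonbra}) and the extendability on the regular set (Proposition~\ref{prop:reg-pooints}) keep the transport well behaved. The finiteness and local boundedness of $\theta$, by contrast, are absorbed into the $\lambda$–concavity inequality itself and require no separate input.
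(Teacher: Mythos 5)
Your core derivation is the right one, and it is worth saying up front that the paper itself contains no proof of this lemma: it simply cites \cite{Kap-Ket-18}. The ball-interpolation argument you give (apply $CD(K,n)$ to the normalized restrictions of $\m$ to $B_\eps(x_0)$ and $B_\eps(x_1)$, trap $\supp\mu_t$ in $B_{c(t)\eps+o(\eps)}(\gamma(t))$ by the $CAT(\uk)$ Jacobi-field comparison, divide by $\omega_n^{1/n}\eps$ and let $\eps\to0$) is the natural proof of the three-point inequality $c(t)\,\theta(\gamma(t))^{1/n}\ge \tau^{(1-t)}_{K,n}(L)\,\theta(\gamma(0))^{1/n}+\tau^{(t)}_{K,n}(L)\,\theta(\gamma(1))^{1/n}$, your Taylor expansions of $\tau$ and of $c(t)$ are correct, and the propagation argument for finiteness of $\theta$ works (using Remark~\ref{rem:disintegration} to see that a.e.\ point near $x_0$ is an interior point of a geodesic issuing from $x_0$).

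The gaps are both in the final upgrade from this inequality to ``$\theta$ is semiconcave''. First, the phrase ``in the barrier sense'' presupposes regularity that $\theta$ does not yet have: at this stage $\theta$ is only Borel and lower semicontinuous (the latter because Bishop--Gromov makes $\theta$ a supremum of the lower semicontinuous functions $x\mapsto \m(B_r(x))/v_{K,n}(r)$), so a second-order barrier argument cannot be run directly. Moreover the error in your inequality is \emph{multiplicative} in $\theta$, so to get a concavity statement without a priori bounds you must pass to $\log\theta$ (AM--GM turns the midpoint version into $\log\theta(\mathrm{mid})\ge\tfrac12(\log\theta(a)+\log\theta(b))-C\,d(a,b)^2$, an additive error), add the compensating quadratic, and invoke a Bernstein--Doetsch/Sierpi\'{n}ski step: a midpoint-concave function whose negative is locally bounded above (here by upper semicontinuity of $-\log\theta$) is concave. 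Second, and more seriously, your route to a local upper bound for $\theta$ --- ``pairs of nearby points symmetric about a given point'' --- requires, for every $y$ near $p$, that the geodesic $[y,p]$ extend beyond $p$. Such extendability is available on $X^g_{reg}$ (Proposition~\ref{prop:reg-pooints}) and at almost every point, but not at an arbitrary $p\in X$; at a non-regular $p$ the argument yields nothing. The paper is itself careful on exactly this point: the corollary following Lemma~\ref{dens-semiconcave} deduces local boundedness of $\theta$ from semiconcavity \emph{only on} $X^g_{reg}$, using the extendability available there. Your closing sentence, that finiteness and local boundedness ``require no separate input,'' is therefore both inconsistent with your own intermediate step and unjustified: since the error term is proportional to $\theta$ itself, a locally uniform semiconcavity constant for $\theta$ (as opposed to $\log\theta$) does require a local upper bound on $\theta$. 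What your argument honestly establishes on all of $X$ is the three-point inequality, equivalently uniform $\lambda$-concavity of $\log\theta$ along short geodesics --- which is in fact all that the paper uses downstream --- but it is not, as written, the literal statement that $\theta$ is semiconcave on $X$.
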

\begin{corollary}
$\theta=f$ is locally Lipschitz near any {$p\in X_{reg}^g$.} 
\end{corollary}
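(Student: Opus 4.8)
The plan is to combine the semiconcavity of $f$ supplied by Lemma~\ref{dens-semiconcave} with the uniform extendability of geodesics near a regular point, and then to run the classical one–variable slope estimate for concave functions along geodesics. Fix $p\in X_{reg}^g$. Semiconcavity means there is a ball $B_{2r}(p)\subset U$ and a constant $\lambda$ so that for every unit-speed geodesic $\gamma$ in $B_{2r}(p)$ the function $t\mapsto f(\gamma(t))-\tfrac{\lambda}{2}t^2$ is concave. Since $p$ is regular, Proposition~\ref{prop:reg-pooints} ii) furnishes an $\eps>0$ (which we may take $\le r$) such that every geodesic starting in the compact set $\overline{B_r(p)}$ extends to length at least $\eps$. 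Finally, $f=\theta>0$ gives a free pointwise lower bound, and I will argue separately that $f$ is bounded \emph{above} on $B_{2r}(p)$.

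Granting local boundedness, the Lipschitz estimate is formal. Given $q_1,q_2\in B_{r/2}(p)$, let $\gamma$ be the unit-speed geodesic with $q_1=\gamma(0)$, $q_2=\gamma(s)$, $s=d(q_1,q_2)\le r$, and extend it past $q_2$ to $\gamma(s+\eps)\in B_{2r}(p)$. With $h(t)=f(\gamma(t))-\tfrac{\lambda}{2}t^2$, concavity of $h$ makes its difference quotients nonincreasing, so
\[
\frac{h(s)-h(0)}{s}\ \ge\ \frac{h(s+\eps)-h(s)}{\eps}\ \ge\ -\frac{1}{\eps}\Big(\sup\nolimits_{B_{2r}(p)}f+\tfrac{|\lambda|}{2}\,\eps(2r+\eps)\Big).
\]
Rearranging and absorbing the quadratic term yields $f(q_2)-f(q_1)\ge -L\,d(q_1,q_2)$ with $L$ independent of $q_1,q_2$; interchanging the roles of $q_1$ and $q_2$ (now extending past $q_1$) gives the reverse inequality, whence $|f(q_1)-f(q_2)|\le L\,d(q_1,q_2)$ on $B_{r/2}(p)$.

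The one step that is not purely formal, and the place where regularity of $p$ is essential, is the \emph{local upper bound} on $f$. Here I would transport the problem through the bi-Lipschitz $DC$-chart $x\colon U\to V$ of Subsection~\ref{ln}: on the open set $V\subset\R^n$ the function $f\circ x^{-1}$ is finite and semiconcave along $d_g$-geodesics, and a finite semiconcave (hence locally $\lambda$-concave) function is continuous, in particular locally bounded, in the \emph{interior} of its domain. Equivalently one can argue intrinsically: since $T_p^gX\cong\R^n$ the space of geodesic directions at $p$ is the full round sphere, so one may choose geodesics through $p$ whose initial directions positively span $T_p^gX$, and $\lambda$-concavity along each of them, together with the uniform extension length $\eps$, dominates $f$ near $p$ by an affine-plus-quadratic expression and so bounds it above. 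I expect this passage from the pointwise, direction-by-direction semiconcavity to a genuine two-sided local bound to be the main obstacle; once it is in place, the role of $p\in X_{reg}^g$ reduces to providing, via Proposition~\ref{prop:reg-pooints}, the manifold chart and the uniform lower bound $\eps$ on geodesic extensions that make the slope estimate above uniform over the whole ball.
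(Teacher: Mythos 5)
Your overall strategy --- first establish a local bound on $f$, then run the one--dimensional slope estimate for $\lambda$-concave functions along geodesics extended a uniform amount $\eps$ --- is exactly the paper's proof, and your Lipschitz step (the displayed difference-quotient inequality plus symmetrization) is correct. The gap sits precisely where you suspect it: neither of your two arguments for the local \emph{upper} bound on $f$ works as stated. Route A transports $f$ to the chart and invokes the classical fact that a finite semiconcave function on an open subset of $\R^n$ is continuous; but that fact concerns concavity along \emph{straight segments}, whereas $f\circ x^{-1}$ is only known to be semiconcave along $d_g$-geodesics. The chart $x$ is merely bi-Lipschitz and $g$ is only $BV_0$, so $d_g$-geodesics are not segments and segment-concavity of $f\circ x^{-1}$ is not available; indeed the paper can only conclude that $f\circ x^{-1}$ is $DC$ in the chart \emph{after} it knows $f$ is locally Lipschitz (Step 5 of Section 4), so appealing to chart regularity here is circular. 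Route B fails for a more basic reason: $\lambda$-concavity of $f$ along finitely many geodesics through $p$, however their initial directions are chosen, only controls $f$ \emph{on} those geodesics; it says nothing about $f$ at points off them, hence cannot bound $f$ on a neighborhood.

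The missing idea --- and what the paper's one-line assertion ``semiconcavity, $\theta\ge 0$ and local extendability imply local boundedness'' encodes --- is to use, for \emph{each} point $q$ near $p$, the geodesic from $q$ through $p$ extended past $p$: a reflection argument centered at the one point where the value of $f$ is known. Concretely, let $\gamma$ be unit speed with $\gamma(0)=q$, $\gamma(s)=p$, $s=d(q,p)<r$, extended to $\gamma(s+\eps)=q'$, and let $h(t)=f(\gamma(t))-\tfrac{\lambda}{2}t^2$. Concavity of $h$ at the interior point $t=s$ gives
\[
h(s)\ \ge\ \frac{\eps}{s+\eps}\,h(0)+\frac{s}{s+\eps}\,h(s+\eps),
\qquad\text{hence}\qquad
f(q)=h(0)\ \le\ \frac{s+\eps}{\eps}\,h(s)-\frac{s}{\eps}\,h(s+\eps).
\]
Since $h(s)\le f(p)+\tfrac{|\lambda|}{2}r^2$ and, by $f\ge 0$, $h(s+\eps)\ge -\tfrac{|\lambda|}{2}(r+\eps)^2$, this yields
\[
f(q)\ \le\ \frac{r+\eps}{\eps}\Bigl(f(p)+\tfrac{|\lambda|}{2}r^2\Bigr)+\frac{r}{\eps}\cdot\tfrac{|\lambda|}{2}(r+\eps)^2
\qquad\text{for all } q\in B_r(p),
\]
which is the required local upper bound. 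With this in place your slope estimate goes through verbatim and completes the proof.
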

\begin{proof}
First observe that semiconcavity of $\theta$, 
the fact that $\theta \ge 0$ and local extendability of geodesics on $X_{reg}^g$
imply that $\theta$ must be locally bounded on $X_{reg}^g$.
Now the corollary becomes an easy consequence of Lemma~\ref{dens-semiconcave}, the fact that geodesics are locally extendible a definite amount near $p$ by Proposition~\ref{prop:reg-pooints}
and the fact that a semiconcave function on $(0,1)$ is locally Lipschitz.
\end{proof}

%
{\bf 1.} Since small balls in spaces with curvature bounded above are geodesically convex, 
we can assume that $\diam X<\pi_\kappa$. { Let $p\in X$, $x:U\rightarrow \mathbb{R}^n$ and $\tilde{\mathcal{A}}$ be as in the previous subsection.

By the same argument as in ~\cite[Section 4]{Per-DC} (cf. \cite{palvs}, \cite{ambrosiobertrand})  it follows that any $u\in \tilde {\mathcal A}$ lies in  $D({\bf \Delta},U,\mathcal H_n)$  and the $\mathcal{H}^n$-absolutely continuous part of ${\bf \Delta}_0 u$ can be computed using standard Riemannian geometry formulas that is 
\begin{align}\label{equ:laplace}
{\bf\Delta}_0^a(u)=\frac{1}{\sqrt{|g|}}\frac{\partial^{ap}}{\partial x_j}\bigl( g^{jk}\sqrt{|g|}\frac{\partial u}{\partial x_k}\bigr)
\end{align}
where $|g|$ denotes the pointwise determinant of $g^{ij}$.  Here ${\bf \Delta} _0$ denotes the measure valued Laplacian on $(U, d, \mathcal H_n)$.
Note that $g$, $\sqrt{|g|}$ and $\frac{\partial u}{\partial x_i}$ are $BV_0$-functions, and the
derivatives on the right are understood as approximate derivatives.

Indeed, w.l.o.g. let $u\in DC_0(U)$, and let $v$ be Lipschitz with compact support in $U$. As before we identify $u$ and $v$ with their representatives in $x$ coordinates. 
First, we note that, since $g$, $\sqrt{|g|}$ and $\frac{\partial u}{\partial x_i}$ are $BV_0$, their product is also in $BV_0$, as well as the product with $v$. Then, the Leibniz rule \eqref{equ:leibniz} for the approximate partial derivatives yieds that 
\begin{align*}
\sqrt{|g|}g^{ij}\frac{\partial u}{\partial x_i}\frac{\partial^{ap} v}{\partial x_j}=- \frac{\partial^{ap}}{\partial x_j}\left(\sqrt{|g|}g^{ij}\frac{\partial u}{\partial x_i}\right) v +\frac{\partial^{ap}}{\partial x_j}\left(\sqrt{|g|}g^{ij}\frac{\partial u}{\partial x_i} v\right) \ \mathcal{L}^n\mbox{-a.e.}\ .
\end{align*}
Again using  \eqref{equ:leibniz} we  also have that

\begin{align}
\sqrt{|g|}g^{ij}\frac{\partial u}{\partial x_i}\frac{\partial v}{\partial x_j}=- \frac{\partial}{\partial x_j}\left(\sqrt{|g|}g^{ij}\frac{\partial u}{\partial x_i}\right) v +\frac{\partial }{\partial x_j}\left(\sqrt{|g|}g^{ij}\frac{\partial u}{\partial x_i} v\right) \mbox{as measures}\ 
\end{align}
and the absolutely continuous with respect to  $\mathcal{L}^n$ part of this equation is given by the previous identity.

The fundamental theorem of calculus for BV functions (see ~\cite[Theorem 5.6]{Gar-Evans}) yields that 
\begin{align}\int_V \frac{\partial }{\partial x_j}\left(\sqrt{|g|}g^{ij}\frac{\partial u}{\partial x_i} v\right) =0. \end{align}

Moreover, by Lemma \ref{lem:lipschitz} $\langle \nabla v,\nabla u\rangle$ is given in $x$ coordinates by $g^{ij}\frac{\partial v}{\partial x_j}\frac{\partial u}{\partial x_i} $ $\mathcal{L}^n$-a.e.\ .

Combining the above formulas gives that

\begin{align*}
-\int_V \langle \nabla u,\nabla v\rangle \sqrt{|g|} d\mathcal{L}^n&= \int_V \left[ \frac{\partial^{ap}}{\partial x_j}\left(
\sqrt{|g|}g^{ij}\frac{\partial u}{\partial x_i}\right) v -\frac{\partial^{ap}}{\partial x_j}\left(
\sqrt{|g|}g^{ij}\frac{\partial u}{\partial x_i} v\right)\right] d\mathcal{L}^n\\
&= \int_V \frac{1}{\sqrt{|g|}} \frac{\partial^{ap}}{\partial x_j}\left(
\sqrt{|g|}g^{ij}\frac{\partial u}{\partial x_i}\right) v \sqrt{|g|} d\mathcal{L}^n+\int_Vv d\mu
\end{align*}
where $\mu$ is some signed measure such that $\mu\perp \mathcal L^n$.
This implies \eqref{equ:laplace}.
%

\medskip
{\bf 2.} Since $(X,d,m)$ is $RCD(K,n)$ for any $q\in X$, we have that $d_q$ lies in $D({\bf \Delta},U\backslash \{q\},m)$  and  ${\bf \Delta}d_q$ is locally bounded above on $ U\backslash \{q\}$ by $const\cdot m$ by Theorem \ref{th:laplacecomparison}.

Furthermore, since by Proposition~\ref{prop:reg-pooints} all geodesics in $U$ are locally extendible  
we have ${\bf \Delta}d_q= \left[{\bf \Delta} d_q\right]^{reg} \cdot\m$ on $U\backslash \left\{q\right\}$ and $\left[{\bf \Delta} d_q\right]^{reg} $ is locally bounded below on $ U\backslash \{q\}$ again by Theorem \ref{th:laplacecomparison}. Therefore $\left[{\bf \Delta} d_q\right]^{reg} $ is in $L^\infty_{loc}(U\backslash \{q\})$ with respect to $m$ (and also $\mathcal H_n)$, and in particular, ${\bf \Delta} d_q$ is locally $L^2$.  

By the chain rule for ${\bf \Delta}$ \cite{giglistructure} the same holds for any $u,h\in \tilde {\mathcal A}$ on all of $U$ as by construction $u$ and $h$ only involve distance functions to points outside $U$.

Recall the following lemma from \cite[Lemma 6.7]{amslocal} (see also \cite{mondinonaber}).
\begin{lemma}\label{lem:cutoff}
Let $(X,d,\m)$ be a metric measure space satisfying a $RCD$-condition. Then for all $E\subset X$ compact and all $G\subset X$ open such that $E\subset G$ there exists a Lipschitz function $\chi:X\rightarrow [0,1]$ with
\begin{itemize}
 \item[(i)] $\chi=1$ on $E_\nu=\left\{x\in X:\exists y\in E: d(x,y)<\nu\right\}$ and $\supp\chi\subset G$,
 \medskip
 \item[(ii)] ${\bf\Delta}\chi\in L^{\infty}(\m)$ and $|\nabla\chi|^2\in W^{1,2}(X)$.
\end{itemize}
\end{lemma}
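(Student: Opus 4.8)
The result is established in \cite[Lemma 6.7]{amslocal} (see also \cite{mondinonaber}); I outline the construction. The plan is to reduce, via compactness of $E$ and properness of $(X,d)$ (which holds since $X$ is $RCD(K,N)$ with $N<\infty$), to the following local task: fix $0<\nu<R$ with $\overline{E_R}\subset G$ and $\overline{E_R}$ compact, and produce a single Lipschitz function $\chi$ with $\chi\equiv 1$ on $E_\nu$, $\supp\chi\subset E_R\subset G$, $0\le\chi\le 1$, and ${\bf\Delta}\chi\in L^\infty(\m)$. Once such a $\chi$ is found, the remaining assertion $|\nabla\chi|^2\in W^{1,2}(X)$ is automatic: on an $RCD$ space the Bakry--Emery/Bochner self-improvement guarantees that any Lipschitz function with $L^\infty$ measure-valued Laplacian has $|\nabla\chi|^2\in W^{1,2}(X)$, and this is exactly how the two conditions are coupled in \cite{amslocal}.

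The analytic core is the bound ${\bf\Delta}\chi\in L^\infty$. A naive candidate is $\phi=\eta\circ d_E$, where $d_E=d(\cdot,E)$ and $\eta\colon\mathbb R\to[0,1]$ is a smooth nonincreasing profile equal to $1$ on $(-\infty,\nu]$ and to $0$ on $[R,\infty)$. By the chain rule for ${\bf\Delta}$ and the Laplace comparison of Theorem~\ref{th:laplacecomparison} applied to the distance function, the measure ${\bf\Delta}\phi$ is bounded above by $C\,\m$ on $E_R\setminus E_\nu$; however its singular part need not vanish, so $\phi$ itself does not satisfy ${\bf\Delta}\phi\in L^\infty$. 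The remedy is to regularize by the heat semigroup $h_t$. Since the heat flow commutes with the Laplacian, one has ${\bf\Delta}(h_t\phi)=h_t({\bf\Delta}\phi)$, and because $RCD(K,N)$ spaces enjoy Gaussian heat-kernel bounds, integrating the one-sidedly bounded signed measure ${\bf\Delta}\phi$ against the heat kernel yields a genuinely bounded density, i.e.\ $\Delta(h_t\phi)\in L^\infty(\m)$ for every $t>0$; moreover $h_t\phi$ remains Lipschitz thanks to the gradient estimate $|\nabla h_t\phi|\le e^{-Kt}h_t|\nabla\phi|$.

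It remains to restore the geometric normalizations lost under the heat flow, since for $t>0$ the function $h_t\phi$ is neither identically $1$ on $E_\nu$ nor supported in $E_R$. I would handle this by a telescoping argument: fix a finite chain $E_\nu=E_{\nu_0}\subset E_{\nu_1}\subset\cdots\subset E_{\nu_k}=E_R$, build a heat-regularized bump adapted to each annulus $E_{\nu_{j+1}}\setminus E_{\nu_j}$ with a correspondingly small time parameter, and assemble them so that the sum equals $1$ on $E_\nu$ and $0$ outside $E_R$; a final post-composition with a smooth profile $\zeta\colon\mathbb R\to[0,1]$ pins the range to $[0,1]$ and the boundary values exactly. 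The chain rule keeps ${\bf\Delta}$ of the composition in $L^\infty$, since the inner function is Lipschitz (so $|\nabla(\cdot)|\in L^\infty$) with $L^\infty$ Laplacian. The main obstacle is precisely this last reconciliation: the smoothing that produces $\Delta\in L^\infty$ requires $t>0$ and therefore spreads the support and rounds off the plateau over $E_\nu$, so the quantitative bookkeeping in the telescoping/profile step --- choosing the times and the chain so that $\chi\equiv 1$ on $E_\nu$, $\supp\chi\subset G$, and $\|\Delta\chi\|_{L^\infty}<\infty$ hold simultaneously --- is the technical heart of the argument, carried out in \cite{amslocal, mondinonaber}.
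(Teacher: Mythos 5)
The paper does not prove this lemma at all: it is imported verbatim from \cite[Lemma 6.7]{amslocal} (with a pointer to \cite{mondinonaber}), so the citation you give \emph{is} the paper's entire argument, and in that sense your proposal matches the paper exactly.

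Since you went on to sketch the construction, three corrections are worth recording. First, your claim that $|\nabla\chi|^2\in W^{1,2}(X)$ is ``automatic'' for any Lipschitz function with ${\bf\Delta}\chi\in L^\infty$ is not a theorem in this generality: the Savar\'e--Gigli self-improvement (measure-valued Bochner) applies to test functions, i.e.\ it requires $\Delta\chi\in W^{1,2}(X)$, not merely $\Delta\chi\in L^\infty(\m)$. In the actual AMS/Mondino--Naber construction this is not an issue because the heat-regularized function is a test function: $\Delta h_t\phi = h_{t/2}\bigl(\Delta h_{t/2}\phi\bigr)\in W^{1,2}\cap L^\infty$, using analyticity of the semigroup and (local) ultracontractivity of $h_t$ on an $RCD(K,N)$ space with $N<\infty$. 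Second, the commutation ${\bf\Delta}(h_t\phi)=h_t({\bf\Delta}\phi)$ needs justification, since $\phi=\eta\circ d_E$ is not in $D_{L^2(\m)}(\Delta)$ (its Laplacian is only a signed measure, by Theorem~\ref{th:laplacecomparison}); one either integrates the measure against the heat kernel and verifies the identity, or avoids it entirely via the ultracontractivity route above. Third, the telescoping over a chain of annuli is unnecessary and is not how the cited proof goes: Gaussian heat-kernel bounds give, for $t$ small, $h_t\phi\ge 3/4$ on $E_\nu$ and $h_t\phi\le 1/4$ off a compact neighborhood of $\supp\phi$, so a \emph{single} post-composition with a smooth profile $\zeta$ (equal to $0$ on $[0,1/4]$ and $1$ on $[3/4,1]$) restores the exact plateau and support; the chain rule then keeps $\Delta\chi\in L^\infty$ and $|\nabla\chi|^2\in W^{1,2}$ because $h_t\phi$ is a test function with bounded gradient. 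With these repairs your sketch coincides with the argument in \cite{amslocal,mondinonaber}.
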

Let us choose a cut-off function $\chi: X\rightarrow [0,1]$ as in the previous lemma for $G$ with $\bar{G}\subset U$ and $E=\bar{B}_{\delta}(p) \subset G$ for some $\delta \in (0,\eps)$.

{Let $u,h\in \tilde {\mathcal A}$.}
By the chain rule for  ${\bf \Delta}$ it again follows that $$ {\bf \Delta}(\chi u)=\left[ {\bf \Delta}(\chi u)\right]^{reg} \m \ \ \& \ \ \left[ {\bf \Delta}(\chi u)\right]^{reg} \in L^2(\m).$$
Moreover, (\ref{equ:integrationbyparts}) holds for Lipschitz functions on $X$. Hence $\chi u\in D_{L^2(\m)}(\Delta)$.

Therefore $\chi u,\chi h\in H^{2,2}(X)$ by Remark \ref{rem:H22}, $\langle \nabla \chi u,\nabla \chi h\rangle\in W^{1,2}(U)$ by Proposition \ref{prop:Hess} and the Hessian of $\chi u$ can be computed by the formula (\ref{id:Hess}). 
Moreover, by locality of the minimal weak upper gradient
\begin{align}\label{hess-1}
&2\Hess (\chi u)(\nabla (\chi h_1), \nabla (\chi h_2))|_{B_{\delta}(p)}\nonumber\\
&\ \ \ \ \ \ = \langle \nabla h_1,\nabla\langle\nabla u,\nabla h_2\rangle\rangle + \langle \nabla h_2,\nabla \langle \nabla u,\nabla h_1\rangle \rangle - \langle \nabla u,\nabla \langle\nabla h_1,\nabla h_2\rangle\rangle\ \mbox{$\m$-a.e. in $\bar{B}_{\delta}(p)$}.
\end{align}
Note that, for instance, $$W^{1,2}(\bar{B}_{\delta}(p))\ni \langle \nabla \chi u, \nabla \chi h_2\rangle|_{\bar{B}_{\delta}(p)}= \langle \nabla \chi u|_{\bar{B}_{\delta}(p)}, \nabla \chi h_2|_{\bar{B}_{\delta}(p)}\rangle =  \langle \nabla u|_{\bar{B}_{\delta}(p)}, \nabla h_2|_{\bar{B}_{\delta}(p)}\rangle.$$
\begin{remark} 
It is not clear that $u$ itself is in the domain of Gigli's Hessian since $u$ is not contained $D_{L^2(\m)}(\Delta)$ (integration by parts for $u$ would involve boundary terms).
Nevertheless, the equality and the RHS in \eqref{hess-1} are well-defined on $B_{\delta}(p)$.
We denote the RHS in \eqref{hess-1} with $Hu(h_1,h_2)$.
\end{remark}
\medskip

{\bf 3.} The aim of this paragraph is to compute $H u( x_i, x_j)g_{ij}$ on $B_{\delta}(p)$ in the $DC_0$ coordinate chart $x$.
In the following we assume w.l.o.g. that $B_{\eps}(p)=B_{\delta}(p)$ for $\delta$ like in the previous paragraph.

Since $u,h_1,h_2$ are $DC_0$ in $x$ coordinates we have that $\langle \nabla h_1,\nabla h_2\rangle= g^{ij}\frac{\partial h_1}{\partial x_i}\frac{\partial h_2}{\partial x_j}$ is $BV$  and the same holds for $\langle \nabla u,\nabla h_1\rangle$, $\langle \nabla u,\nabla h_2\rangle$.
Moreover, $\langle \nabla h_i,\nabla h_j\rangle, \langle \nabla u,\nabla h_i\rangle\in W^{1,2}(\bar{B}_{\delta}(p))$ as we saw before. 

Hence, with the help of Proposition \ref{prop:BV} the RHS of \eqref{hess-1} can be computed pointwise in $x$ coordinates at points of approximate differentiability of $\frac{\partial u}{\partial x_i}, \frac{\partial h_1}{\partial x_i}$ and $\frac{\partial h_2}{\partial x_i}$, $i=1,\dots n$, and \eqref{hess-1} can be understood to hold a.e. in the sense of approximate derivatives.
That is, we can write
\begin{equation}\label{hess-2}
\langle \nabla u,\nabla \langle\nabla h_1,\nabla h_2\rangle\rangle=g^{ij}\frac{\partial^{ap} u}{\partial x_i}\frac{\partial^{ap}}{\partial x_j} \langle\nabla h_1,\nabla h_2\rangle=
g^{ij}\frac{\partial u}{\partial x_i}\frac{\partial^{ap}}{\partial x_j}( g^{kl}\frac{\partial h_1}{\partial x_k}\frac{\partial h_2}{\partial x_l})
\end{equation}
and do the same for the other two terms in the RHS of  \eqref{hess-1}.

Using that $g^{ij}=\langle \nabla x_i,\nabla x_j\rangle$ and  $\frac{\partial}{\partial x_i}=\sum_jg_{ij}\nabla x_j$ a standard computation shows that
for any $u\in  \tilde {\mathcal A}$ it holds that
\begin{equation}\label{laplace-coords}
\frac{1}{\sqrt{|g|}}\frac{\partial^{ap}}{\partial x_j}\bigl( g^{jk}\sqrt{|g|}\frac{\partial u}{\partial x_k}\bigr)=Hu(x_i,x_j)g_{ij}
\end{equation}
on $B_{\delta}(p)$.

 
The easiest way to verify formula \eqref{laplace-coords} is as follows.  
Let $S$ be the set of points in $V$ where  $\nabla u,g_{ij}$ have approximate derivatives and  $ \frac{\partial^{ap}}{\partial x_i}( \frac{\partial u}{\partial x_j})=  \frac{\partial^{ap}}{\partial x_j}( \frac{\partial u}{\partial x_i})$. {Then by~\eqref{2n-der-commute} $S$ has full measure in $V$,
and hence it's enough to verify \eqref{laplace-coords} pointwise on $S$.}

  Let $q\in S$.
  Let $\hat g$ be a smooth metric on a neighborhood of $q$ which such that $\hat g(q)= g(q)$ and $D\hat g(q)=D^{ap} g(q)$.  
   Likewise let $\hat u$ be a smooth function on a neighborhood of $q$ such that $\hat u(q)=u(q), D \hat u(q)=D u(q)$ and $D\frac{\partial \hat u}{\partial x_i}(q)=D^{ap}\frac{\partial  u}{\partial x_i}(q)$ for all $i$. Such $\hat u$ exists (we can take it to be quadratic in $x$) since $ \frac{\partial^{ap}}{\partial x_i}( \frac{\partial u}{\partial x_i})(q)=  \frac{\partial^{ap}}{\partial x_j}( \frac{\partial u}{\partial x_i})(q)$. 
  Then
 
 \[
 \frac{1}{\sqrt{|g|}}\frac{\partial^{ap}}{\partial x_j}\bigl( g^{jk}\sqrt{|g|}\frac{\partial u}{\partial x_k}\bigr)(q)= \frac{1}{\sqrt{|\hat g|}}\frac{\partial}{\partial x_j}\bigl(  \hat g^{jk}\sqrt{|\hat g|}\frac{\partial \hat u}{\partial x_k}\bigr)(q)
 \]
where all  the derivatives are approximate derivatives.

Similarly
\[
H u(x_i, x_j)(q)g_{ij}(q)=H \hat u(x_i, x_j)(q)\hat g_{ij}(q)
\]
where again all  the derivatives in  \eqref{hess-1}  and \eqref{hess-2}   are approximate derivatives.

But 
\[
\frac{1}{\sqrt{|\hat g|}}\frac{\partial}{\partial x_j}\bigl( \hat g^{jk}\sqrt{|\hat g|}\frac{\partial \hat u}{\partial x_k}\bigr)(q)=\Hess_{\hat g} \hat u(\nabla_{\hat g}  x_i,\nabla_{\hat g}  x_j)\hat g_{ij}(q)
\]
by standard Riemannian geometry since all functions involved are smooth. Since $q\in S$ was arbitrary this proves that \eqref{laplace-coords} holds a.e. in the sense of approximate derivatives as claimed.

\medskip
{\bf 4.}
It follows that 
\begin{align}\label{equ:A}
&\Tr \Hess (\chi u) |_{B_{\delta}(p)}= \Hess (\chi u)(\nabla x_i, \nabla x_j) g_{ij} |_{B_{\delta}(p)}\smallskip\nonumber\\
&\ \ \ \ \ \ \ \ \ \ \ = \Hess (\chi u)(\nabla \chi x_i, \nabla \chi x_j) g_{ij} |_{B_{\delta}(p)}= H u(x_i,x_j)g_{ij}=\Delta_0 u|_{B_\delta(p)}.
\end{align}
for every $u\in \tilde{\mathcal{A}}$ where $\Hess$ is the Hessian in the sense of Gigli, and $H(u)$ is denotes the RHS of (\ref{hess-1}).
The first equality in \eqref{equ:A} is the definition of $\Tr$, the second equality is the $L^{\infty}$-homogeneity of the tensor $\Hess (\chi u)$, and the third equality is the identity (\ref{hess-1}).

Since $f$ is locally Lipschitz { and positive on $B_{\delta}(p)$}, we can perform the following integration by parts in $DC_0$ coordinates. Let $u\in \tilde{\mathcal{A}}$ and let $g$ be Lipschitz with compact support in $B_\delta(p)$. $\chi u\in D_{L^2(\m)}(\Delta)$ implies 
$u|_{B_\delta(p)}\in D({\bf \Delta},B_{\delta}(p))$. Then
\begin{align*}
\int_{B_{\delta}(p)} g \Delta u d\m&=-\int_{B_{\delta}(p)} \langle \nabla u, \nabla g\rangle d\m=-\int_{B_{\delta}(p)} \langle \nabla u, \nabla g\rangle f d\mathcal{H}^n\\
&= - \int_{B_{\delta}(p)} \langle \nabla u,\nabla (g f)\rangle d\mathcal{H}^n +\int_{B_{\delta}(p)} \langle \nabla u, \nabla \log f\rangle g f d\mathcal{H}^n\\
&= \int_{B_{\delta}(p)} (\Delta_0 u +\langle \nabla u,\nabla \log f\rangle) g d\m 
\end{align*}
 yields $$\Delta u=\Delta_0 u+\langle \nabla u, \nabla \log f\rangle$$ on $B_{\delta}(p)$ for any $u \in \tilde {\mathcal A}$. Note again that only $\chi u$ is in $D_{L^2(\m)}(\Delta)$.

On the other hand, by Corollary \ref{cor:trace} it holds that $\Delta (\chi u)=\Tr \Hess (\chi u)$ $\m$-a.e.\ .
Thus $$0=
\Tr \Hess (\chi u) |_{B_{\delta}(p)}-
\Tr \Hess (\chi u) |_{B_{\delta}(p)}=\Delta u|_{B_\delta(p)}-\Delta_0 (\chi u)|_{B_\delta(p)}=\langle \nabla u,\nabla \log f\rangle|_{B_{\delta}(p)}$$ a.e. for any $u\in \tilde {\mathcal A}$. 

\medskip
{\bf 5.} Therefore  { $f \nabla \log f|_{B_\delta(p)}=\nabla f|_{B_\delta(p)}=0$.}
Indeed, since $f$ is semiconcave, $f\circ x^{-1}$ is DC by ~\cite{Lytchak-Nagano18}. Hence $\nabla f=g^{ij}\frac{\partial f}{\partial x_i}$ is continuous on a set of full measure $Z$ in $B_\delta(p)$ since this is true for convex functions on $\R^n$. Let $q\in Z$ be a point of continuity of  $\nabla f|_Z$ and $v=\nabla f(q)$. Assume $v\ne 0$. Then due to extendability of geodesics there exists $z\notin U$ such that $\nabla d_z(q)=\frac{v}{|v|}$. Since  $\nabla d_z$ is continuous near $q$ and $\nabla f$ is continuous on $Z$ it follows $\langle \nabla f,\nabla d_z\rangle\ne 0$ on a set of positive measure.
Hence $\nabla f|_{B_\delta(p)}=0$ and $f|_{B_\delta(p)}=const$. 

\medskip
{\bf 6.}
We claim that this implies that $f$ is constant on { $X^g_{reg}$}. (This is not immediate since we don't know yet that { $X^g_{reg}$} is connected.)
Indeed,
since $X$ is essentially nonbranching,  radial disintegration of $m$ centered at $p$ ({Theorem~\ref{th:laplacecomparison}}) 
 implies that for almost all $q\in X$ the set $[pq]\cap X^g_{reg}$ has full measure in $[p,q]$. It is also open in $[p,q]$ since $X^g_{reg}$ is open.

Suppose $q\in X^g_{reg}$ is as above.

Since $\theta$ is semiconcave {on $X$} and locally constant on   $X^g_{reg}$ it is locally Lipschitz {(and hence Lipschitz)} on the geodesic segment $[p,q]$. A Lipschitz function on $[0,1]$ which is locally constant on an open set of full $\mathcal L^1$ measure is constant. Therefore $\theta$ is constant on $[p,q]$ and hence $\theta$ is constant on  $X^g_{reg}$  which has full measure.
 Therefore $f=\theta=const$ a.e. globally. \qed
\small{
\bibliographystyle{amsalpha}
\bibliography{new}

}
\end{document}